\theoremstyle{plain}
\newtheorem{theorem}{Theorem}[section]
\newtheorem*{theorem*}{Theorem}
\newtheorem{definition}[theorem]{Definition}
\newtheorem{lemma}[theorem]{Lemma}
\newtheorem{prop}[theorem]{Proposition}
\newtheorem*{prop*}{Proposition}
\newtheorem{rem}[theorem]{Remark}
\newtheorem*{mt*}{Main Theorem}
\def\Real{{\mathfrak{R}}{\mathfrak{e}}\,}
\DeclareMathOperator{\Span}{Span}
\DeclareMathOperator{\id}{id}
\DeclareMathOperator{\Ker}{Ker}
\DeclareMathOperator{\im}{Im}
\DeclareMathSymbol{\Finv} {\mathord}{AMSb}{"60}
\newcommand\restrict[1]{\raisebox{-.5ex}{$|$}_{#1}}
\newcommand{\R}{\mathbb{R}}
\newcommand{\C}{\mathbb{C}}
\newcommand{\q}{\mathbb{Q}}
\newcommand{\del}{\partial}
\newcommand{\delbar}{\overline{\partial}}
\numberwithin{equation}{section}
\let\phi\varphi
\DeclareFontFamily{U}{MnSymbolC}{}
\DeclareSymbolFont{MnSyC}{U}{MnSymbolC}{m}{n}
\DeclareFontShape{U}{MnSymbolC}{m}{n}{
    <-6>  MnSymbolC5
   <6-7>  MnSymbolC6
   <7-8>  MnSymbolC7
   <8-9>  MnSymbolC8
   <9-10> MnSymbolC9
  <10-12> MnSymbolC10
  <12->   MnSymbolC12}{}
\DeclareMathSymbol{\intprod}{\mathbin}{MnSyC}{'270}
\author{Tommaso Sferruzza}
\address[Tommaso Sferruzza]{
Dipartimento di Scienze Matematiche, Fisiche e Informatiche\\
Unità di Mate\-matica e Informatica\\
Università degli studi di Parma}
\email{tommaso.sferruzza@unipr.it}
\author{Adriano Tomassini}
\address[Adriano Tomassini]{
Dipartimento di Scienze Matematiche, Fisiche e Informatiche\\
Unità di Mate\-matica e Informatica\\
Università degli studi di Parma}
\email{adriano.tomassini@unpir.it}
\title[On cohomological and formal properties of strong K\"ahler with torsion ...]{On cohomological and formal properties of strong K\"ahler with torsion and astheno-K\"ahler metrics}
\keywords{astheno-K\"ahler metric; Strong K\"ahler with torsion metric; geometrically-Bott-Chern-formal metric; triple Aeppli-Bott-Chern-Massey product.}
\thanks{The first author has been supported by GNSAGA of INdAM. The second author has been supported by the project PRIN2017 “Real and Complex Manifolds: Topology, Geometry and holomorphic dynamics” (code 2017JZ2SW5), and by GNSAGA of INdAM}
\subjclass[2010]{53C55; 32C10}
\date{\today}
\begin{document}

\begin{abstract}
We provide families of compact {\em astheno-K\"ahler} nilmanifolds and we study the behaviour of the complex blowup of such manifolds. We prove that the existence of an astheno-K\"ahler metric satisfying an extra differential condition is not preserved by blowup. We also study the interplay between {\em Strong K\"ahler with torsion metrics} and geometrically Bott-Chern metrics. We show that Fino-Parton-Salamon nilmanifolds are geometrically-Bott-Chern-formal, whereas  we obtain negative results on the product of two copies of {\em primary Kodaira surface} , {\em Inoue surface of type $\mathcal{S}_M$} and on the product of a Kodaira surface with an Inoue surface.
\end{abstract}

\maketitle
\tableofcontents
\section{Introduction}
Compact K\"ahler manifolds share many remarkable cohomological and metric properties, e.g., they satisfy the $\del\delbar$-Lemma, all the complex cohomologies, Dolbeault, Aeppli and Bott-Chern, are isomorphic, their underlying structure of smooth manifold is formal in the sense of Sullivan, they satisfy the Hard Lefschetz Condition. As a consequence, the existence of a K\"ahler structure on a compact smooth manifold $M$ forces many topological restraints, e.g., the Betti numbers of even index are positive, the Betti numbers of odd index are even and  Massey products vanish. On the other hand, a large and natural class of compact complex manifolds arises as a quotient of  simply connected nilpotent Lie groups by a discrete uniform subgroup; nevertheless, in  view of Benson and Gordon \cite{BG}, the underlying structure of the smooth manifold of such compact quotients admits a K\"ahler structure if and only if the group is Abelian, that is the manifold is diffeomorphic to a torus, and, more in general, by Hasegawa (see \cite{Ha1} and also \cite{Ha2}) a compact quotient of a simply-connected solvable Lie group by a closed subgroup carries a K\"ahler structure if and only if it is a finite quotient of a complex torus. Furthermore, such  a class of compact manifolds admits special Hermitian metrics, e.g., 
{\em Strong K\"ahler with torsion} metrics or {\em pluriclosed} (see \cite{Bi} and \cite{GHR}), {\em astheno-K\"ahler} metrics in the sense of Jost and Yau (see \cite{JY}). More precisely, a Hermitian metric $g$ with fundamental form $F$ on an $n$-dimensional complex manifold $(M,J)$ is said to be
\begin{itemize}
\item  {\em strong K\"ahler with torsion}, or shortly {\em SKT}, if $\del\delbar F=0$ or, equivalently $dd^cF=0$;\\
\item  {\em astheno-K\"ahler}, if $\del\delbar F^{n-2}=0$ or, equivalently $dd^c F^{n-2}=0$,
\end{itemize}
where $d^c=J^{-1}dJ$.
The SKT metrics have been studied by many
authors and they have also applications in type II string theory
and in $2$-dimensional supersymmetric $\sigma$-models
\cite{GHR,Str,IP}. Moreover, they have also relations with generalized
K\"ahler geometry (see for instance
\cite{GHR,Gu,Hi2,AG,CG}). \newline 

Some rigidity theorems concerning compact
astheno-K\"ahler manifolds have been showed in \cite[Theorem
6]{JY} and in \cite{LYZ}, where, in particular, a generalization
to higher dimension of the Bogomolov's Theorem on $VII_0$ surfaces
is proved (see \cite[Corollary 3]{LYZ}). Astheno-K\"ahler structures on Calabi-Eckmann manifolds have been constructed in \cite{Matsuo}. For other results on SKT and astheno-K\"ahler metrics and special metrics on complex manifolds see also \cite{FPS,Ca, P,IP,MT,FG,FT2,FT,Sw,RT,OOS,PS,FGV} and the references therein.

From the cohomological point of view, beside to the Dolbeault cohomology groups, a useful tool in the study of compact complex (non-K\"ahler) manifolds is provided by {\em Bott-Chern} and {\em Aeppli} cohomology groups. Indeed, the property of satisfying the {\em $\del\delbar$}-Lemma can be characterized in terms of their dimensions (see \cite{AT12}). By adapting the construction of Massey triple products respectively Dolbeault Massey triple products as in \cite{Sul1,DGMS}, respectively in Neisendorfer and Taylor (see \cite{NT}), in \cite{AT15} the notion of {\em triple Aeppli-Bott-Chern-Massey} products are introduced with the aim to give obstructions to the existence of  geometrically-Bott-Chern-formal metrics, that is Hermitian metrics whose space of Bott-Chern harmonic forms has the structure of algebra. The notion of {\em formal Riemannian metrics} has been previously introduced and studied intensively by Kotschick in \cite{Kot}.\newline
Very recently Milivojevi\'{c} and Stelzig in \cite{MS} introduced the {\em n-fold Massey products} in the spectral sequence sense, for a commutative bigraded differential algebra. In particular, for $n=3$, they recover the definition of extending the definition triple Aeppli-Bott-Chern-Massey products as in \cite{AT15}. Furthermore, they consider the notions of  {\em weak formality}, respectively {\em strong formality} which have applications in the study of cohomological properties of complex non K\"ahler manifolds. For other results on these topics we refer to \cite{St, ST, St1, TTo}
\vskip.2truecm
In the first part of the present paper we are interested in providing astheno-K\"ahler metrics on compact nilmanifolds, endowed with left-invariant complex structures and in studying the behaviour of blowup of compact complex manifolds endowed with astheno-K\"ahler metrics, satisfying certain extra differential conditions. In the second part of the paper we focus on the interplay between SKT metrics and geometrically-Bott-Chern-formal metrics. First of all, we construct a family of  simply-connected $2$-step  nilpotent Lie groups $G$, admitting discrete uniform subgroups $\Gamma$ and endowed with a left-invariant complex structure $J$, such that $(\Gamma\backslash G,J)$ carries an astheno-K\"ahler metric (see Theorem \ref{thm:5dim_astheno} for the precise statement). Such a construction will be applied in the study of the behaviour of blowups. \newline
In \cite{FT2} respectively {\cite[Proposition 2.4]{FT} it is proved that the existence of an SKT metric respectively a Hermitian metric $g$ with fundamental form $F$ on an $n$-dimensional compact complex manifold $M$, satisfying $\del\delbar F=0$, $\del\delbar F^2=0$, is stable under blowups of $M$.
In contrast, in Theorem \ref{blowup} we prove that the the existence of a Hermitian metric $g$ with fundamental form $F$  satisfying $\del\delbar F^{n-2}=0,\quad \del\delbar F^{n-3}=0$, is not preserved by  blowups.

Concerning the relation between SKT metrics and geometrically-Bott-Chern-formal metrics, we study the $6$-dimensional nilmanifolds with a left-invariant complex structure admitting a left-invariant SKT metric, which have been characterized by Fino, Parton and Salamon in \cite[Theorem 1.2]{FPS}. In particular, if we denote by {\em FPS-nilmanifold} any such a manifold, we prove the following result.
\begin{theorem*} [see Theorem \ref{thm:SKT_geom_FPS}]
Let $(M,J)$ be a $FPS$-nilmanifold. Then, any left-invariant (SKT) metric is geometrically-Bott-Chern-formal.
\end{theorem*}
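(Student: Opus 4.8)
The plan is to reduce the statement to a finite-dimensional computation on the Lie algebra and then to inspect the algebra of Bott-Chern harmonic forms directly. Write $M=\Gamma\backslash G$ with $\frg$ the Lie algebra of $G$, and fix a left-invariant SKT metric $g$ with fundamental form $F$. First I would invoke the symmetrization map $\mu\colon\Lambda^{\bullet,\bullet}M\to\Lambda^{\bullet,\bullet}\frg^*_\C$: since $J$ and $g$ are left-invariant, $\mu$ commutes with $\del$, $\delbar$ and with the Hodge operator $*$ (the latter because in a left-invariant coframe the metric components are constant, so $*$ is a constant-coefficient pointwise operator that commutes with averaging), hence with $\del^*=-*\,\delbar\,*$, $\delbar^*=-*\,\del\,*$ and therefore with the Bott-Chern Laplacian $\Delta_{BC}$. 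Consequently $\mu$ carries $\Delta_{BC}$-harmonic forms to left-invariant $\Delta_{BC}$-harmonic forms. Combining this with the fact that for FPS-nilmanifolds the left-invariant forms compute the Bott-Chern cohomology, $\mu$ is injective on harmonic forms (it preserves Bott-Chern classes), and a dimension count forces every Bott-Chern harmonic form to be left-invariant. Thus $\mathcal{H}^{\bullet,\bullet}_{BC}(M;g)$ coincides with the space of invariant harmonic forms, and geometric formality becomes the assertion that this finite-dimensional bigraded space is closed under $\wedge$.

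Next I would make the data explicit. By \cite[Theorem 1.2]{FPS}, after a suitable choice of a coframe $\{\omega^1,\omega^2,\omega^3\}$ of $(1,0)$-forms, $J$ belongs to one of finitely many families with structure equations of the shape $d\omega^1=d\omega^2=0$ and $d\omega^3\in\Lambda^{2,0}\oplus\Lambda^{1,1}$ supported on $\omega^1,\omega^2,\bar\omega^1,\bar\omega^2$, the SKT condition $\del\delbar F=0$ imposing a quadratic relation among the structure constants. I would then take a general invariant Hermitian metric $F=\tfrac{i}{2}\sum_{j,k}g_{j\bar k}\,\omega^j\wedge\bar\omega^k$ subject to that constraint, keeping the entries $g_{j\bar k}$ as symbolic parameters, and record the induced $*$, $\del^*$, $\delbar^*$ on invariant forms. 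The crucial structural feature to exploit is that $\omega^1,\omega^2$ are $d$-closed, so $\del$, $\delbar$ and $\del\delbar$ only interact with the $\omega^3,\bar\omega^3$ directions; this drastically limits which monomials can occur in the harmonic spaces.

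With this in hand I would determine each $\mathcal{H}^{p,q}_{BC}$ for $0\le p,q\le 3$, using two reductions to avoid redundant work: complex conjugation gives $\mathcal{H}^{p,q}_{BC}=\overline{\mathcal{H}^{q,p}_{BC}}$, and the Hodge star yields $*\,\mathcal{H}^{p,q}_{BC}\cong\mathcal{H}^{3-q,3-p}_{A}$, relating the Bott-Chern harmonic spaces to the Aeppli harmonic spaces $\mathcal{H}^{\bullet,\bullet}_A$ and halving the cases. Concretely, $\mathcal{H}^{0,0}_{BC}=\C$, $\mathcal{H}^{3,3}_{BC}$ is spanned by the volume form, and $\mathcal{H}^{1,0}_{BC}=\ker d\cap\Lambda^{1,0}=\Span_\C(\omega^1,\omega^2)$, the third harmonic condition $\delbar^*\del^*\alpha=0$ being vacuous in bidegree $(1,0)$; the remaining low-degree spaces follow from an explicit kernel computation. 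The outcome I expect is that $\mathcal{H}^{\bullet,\bullet}_{BC}$ is spanned by wedge monomials in $\omega^1,\omega^2,\bar\omega^1,\bar\omega^2$ together with a short, controlled list of monomials involving $\omega^3$ or $\bar\omega^3$.

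Finally, for closure under products, note that if $\alpha,\beta$ are Bott-Chern harmonic then $\del(\alpha\wedge\beta)=\delbar(\alpha\wedge\beta)=0$ automatically by the Leibniz rule, so the whole problem collapses to the single remaining condition $\delbar^*\del^*(\alpha\wedge\beta)=0$, equivalently $\del\delbar\big(*(\alpha\wedge\beta)\big)=0$, for all pairs of generators in the list above. This last verification, carried out for each FPS family and identically in the symbolic parameters $g_{j\bar k}$, is the main obstacle: it is where the SKT relation among the structure constants must enter, and where one must rule out that any product escapes the subalgebra cut out by the third harmonic condition. I expect that the $d$-closedness of $\omega^1,\omega^2$ makes $\del\delbar\big(*(\alpha\wedge\beta)\big)$ vanish for structural reasons in every case, but confirming this uniformly in the metric parameters — rather than metric-by-metric — is the delicate point, and is precisely what yields geometric Bott-Chern formality of every left-invariant SKT metric.
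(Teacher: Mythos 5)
Your overall framework is sound --- the symmetrization/reduction to invariant forms, and the observation that closure of the harmonic algebra collapses to the single condition $\del\delbar\bigl(\ast_g(\alpha\wedge\beta)\bigr)=0$, both match the paper --- but the proposal stops exactly where the actual proof begins. You explicitly defer the decisive verification (``I expect that\dots but confirming this uniformly in the metric parameters\dots is the delicate point''), and the plan you sketch for it, namely computing every $\mathcal{H}^{p,q}_{BC}$ symbolically in the metric entries for each FPS family and checking all pairwise products, is both incomplete as a proof and far heavier than necessary. The missing idea is the paper's Lemma \ref{lemma:tecnico}: on an FPS-nilmanifold the operator $\del\delbar$ vanishes on \emph{every} left-invariant form, so $\del\delbar\bigl(\ast_g(\alpha\wedge\beta)\bigr)=0$ holds automatically for any invariant $\alpha,\beta$ and any invariant metric, with no enumeration of harmonic spaces and no case analysis at all.

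The lemma itself is proved by exactly the two mechanisms you half-identify but do not combine. First, since $d\alpha^1=d\alpha^2=0$ and $d\alpha^3$ is supported on $\alpha^1,\alpha^2,\overline{\alpha^1},\overline{\alpha^2}$, the Leibniz rule kills $\del\delbar\sigma$ for any invariant monomial $\sigma$ not containing the factor $\alpha^{3\overline{3}}$ (after one application of $\delbar$ the result no longer involves $\alpha^3$ or $\overline{\alpha^3}$, hence is $\del$-closed). Second --- and this is where the SKT hypothesis enters, the point you flag but leave open --- for an invariant metric $F=\frac{i}{2}\sum_k F_{k\overline{k}}\alpha^{k\overline{k}}+\cdots$ the same mechanism gives $\del\delbar F=\frac{i}{2}F_{3\overline{3}}\,\del\delbar\alpha^{3\overline{3}}$; since by \cite[Theorem 1.2]{FPS} every invariant metric on an FPS-nilmanifold is SKT and $F_{3\overline{3}}>0$, this forces $\del\delbar\alpha^{3\overline{3}}=0$. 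Together these two facts give $\del\delbar\equiv 0$ on $\bigwedge^{\bullet,\bullet}\mathfrak{g}$, and the theorem follows immediately. In short: the quadratic relation \eqref{eq:FPS} among the structure constants is not something to be fed into a symbolic computation of harmonic spaces; it is literally equivalent to the single identity $\del\delbar\alpha^{3\overline{3}}=0$, which is the only identity your final step needs.
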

Moreover, we extend this result to a class of nilmanifolds which are a generalization of FPS-manifolds in a arbitrary higher dimension (see Theorem \ref{thm:n-dim-SKT-BC-geom}).

In contrast to the mentioned positive results, on a compact complex manifold the existence of a SKT metric does not imply  the existence of geometrically-Bott-Chern-formal metrics. More precisely, we prove this for the product of a pair of certain compact complex surfaces by providing a non vanishing Aeppli-Bott-Chern-Massey product on each manifold.
\begin{theorem*}[see Theorem \ref{thm:product_BC_form}]
Let $(M,J)$ be the product of either two Kodaira surfaces, two Inoue surfaces, or a Kodaira surface and a Inoue surface. Then $(M,J)$ admits SKT metrics but does not admit geometrically-Bott-Chern-formal metrics.
\end{theorem*}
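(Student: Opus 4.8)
The plan is to treat the two claims in the statement separately. The existence of SKT metrics is elementary: on a compact complex surface the SKT condition $\del\delbar F=0$ is the Gauduchon condition $\del\delbar F^{n-1}=0$ (here $n=2$), so each factor carries an SKT metric---on the Kodaira surface one may simply take the standard left-invariant one. If $F_1,F_2$ are the fundamental forms of SKT metrics on the two surfaces, the product metric $F=\pi_1^*F_1+\pi_2^*F_2$ satisfies $\del\delbar F=\pi_1^*(\del\delbar F_1)+\pi_2^*(\del\delbar F_2)=0$, because $\del$ and $\delbar$ commute with the pullbacks from the two factors; hence $(M,J)$ is SKT.

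The substantive claim is the non-existence of geometrically-Bott-Chern-formal metrics, and the strategy is dictated by the obstruction of \cite{AT15}: if $(M,J)$ admitted such a metric, every triple Aeppli-Bott-Chern-Massey product would vanish. It therefore suffices to produce one non-vanishing triple product on each of the three product manifolds. I would work entirely with left-invariant forms, writing $\varphi^1,\varphi^2$ (resp. $\psi^1,\psi^2$) for the invariant $(1,0)$-forms of the first (resp. second) factor; for the Kodaira surface these obey $d\varphi^1=0$, $\del\varphi^2=0$, $\delbar\varphi^2=\varphi^1\wedge\bar\varphi^1$, and the Inoue surface is described by its analogous invariant structure equations. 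That the inclusion of invariant forms computes the Bott-Chern and Aeppli cohomology---a Nomizu-type fact for the Kodaira nilmanifold and a known statement for the Inoue solvmanifolds---lets me perform the whole computation on the finite-dimensional invariant complex.

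The Massey product is built from relations that become $\del\delbar$-exact only on the product. In the Kodaira--Kodaira case, for instance, $\del\delbar(\varphi^2\wedge\bar\psi^2)=\pm\,\varphi^1\wedge\bar\varphi^1\wedge\psi^1\wedge\bar\psi^1$, so the Bott-Chern classes $\mathfrak{a}=[\varphi^1\wedge\bar\varphi^1]$ and $\mathfrak{b}=[\psi^1\wedge\bar\psi^1]$, each non-trivial in $H^{1,1}_{BC}$, have a $\del\delbar$-exact cup product. Taking the triple $\langle\mathfrak{a},\mathfrak{b},\mathfrak{b}\rangle$, the product $\mathfrak{b}\smile\mathfrak{b}$ vanishes identically, so a defining system is $\xi=\mp\,\varphi^2\wedge\bar\psi^2$ together with $0$, and the Massey product is represented in $H^{2,2}_A$ by $\pm\,\varphi^2\wedge\psi^1\wedge\bar\psi^1\wedge\bar\psi^2$. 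The analogous $\del\delbar$-exact relations on the Inoue--Inoue and Kodaira--Inoue products supply the candidate classes in the remaining two cases.

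The hard part will be the final verification that this Aeppli class is non-zero modulo the indeterminacy $\mathfrak{a}\smile H^{1,1}_A+H^{1,1}_A\smile\mathfrak{b}$. This forces me to compute the full invariant Bott-Chern and Aeppli cohomology of the product---most conveniently via a K\"unneth-type decomposition from the factors---so as to check that the representative lies outside $\im\del+\im\delbar$ and outside the indeterminacy subspace in bidegree $(2,2)$. A secondary, more delicate point concerns the Inoue factors: since for solvmanifolds invariant forms need not compute cohomology, the reduction to the invariant complex must be justified by appealing to the known results for these specific surfaces rather than taken for granted.
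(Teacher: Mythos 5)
Your overall strategy coincides with the paper's: build the SKT metric as a product metric (this half is fine and is exactly what the paper does), and obstruct geometric Bott--Chern formality by exhibiting a non-vanishing triple Aeppli--Bott--Chern--Massey product computed on the invariant complex. However, the one triple product you make explicit --- the Kodaira$\times$Kodaira case --- actually \emph{vanishes}, so it gives no obstruction. Write the structure equations as $d\eta^2=A\,\eta^{1\overline{1}}$, $d\eta^4=B\,\eta^{3\overline{3}}$ (your $\varphi^1,\varphi^2,\psi^1,\psi^2$ are $\eta^1,\eta^2,\eta^3,\eta^4$). Your representative of $\langle[\eta^{1\overline{1}}],[\eta^{3\overline{3}}],[\eta^{3\overline{3}}]\rangle_{ABC}$ is a multiple of $\eta^{23\overline{3}\overline{4}}$. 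But
\[
\delbar\bigl(\eta^{24\overline{4}}\bigr)=-A\,\eta^{14\overline{1}\overline{4}}-B\,\eta^{23\overline{3}\overline{4}},
\qquad\text{so}\qquad
\bigl[\eta^{23\overline{3}\overline{4}}\bigr]_A=\tfrac{A}{B}\,\bigl[\eta^{4\overline{4}}\wedge\eta^{1\overline{1}}\bigr]_A .
\]
Since $\del\delbar\,\eta^{4\overline{4}}=0$, the form $\eta^{4\overline{4}}$ defines a class in $H_A^{1,1}(M)$, and hence $[\eta^{23\overline{3}\overline{4}}]_A$ lies in $H_A^{1,1}(M)\cup[\eta^{1\overline{1}}]_{BC}$, which is precisely (part of) the indeterminacy subspace of your triple product. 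So the coset is the zero coset, independently of whether $[\eta^{4\overline{4}}]_A$ itself vanishes. The same danger threatens any choice in which all three entries have bidegree $(1,1)$ and the middle relation is resolved by a primitive of the form $\eta^2\wedge\overline{\eta^4}$: the resulting representative tends to factor as (Aeppli class)$\,\wedge\,$(end class).

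The paper sidesteps this by taking the third class of bidegree $(1,0)$, namely $\langle[\eta^{1\overline{1}}]_{BC},[\eta^{3\overline{3}}]_{BC},[\eta^{3}]_{BC}\rangle_{ABC}$, whose representative $-\tfrac{1}{A\overline{B}}\eta^{23\overline{4}}$ lives in $H_A^{2,1}(M)$, is Aeppli-harmonic for the diagonal metric (hence non-zero), and is shown to avoid the indeterminacy $H_A^{1,0}\cup[\eta^{1\overline{1}}]+H_A^{1,1}\cup[\eta^{3}]$ by pairing against suitable $d$-closed forms and integrating (Stokes). You would need to replace your triples by ones of this kind in all three cases, and then actually carry out the non-degeneracy check that you defer --- which, as your example shows, is not a formality but the place where a wrong choice silently fails. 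Your closing caveat about justifying the reduction to invariant forms for the Inoue (solvmanifold) factors is a legitimate point that also needs an explicit reference; the paper itself is brief on this.
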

Furthermore, a similar result holds also for manifolds which are not a product of manifolds, as it is shown for a family of nilmanifolds of complex dimension $4$ in Theorem \ref{thm:4-dim_SKT_BC_form}.\vskip.2truecm

The paper is organised as follows. In Section 2, we briefly recall the notions of Dolbeault, Bott-Chern, and Aeppli cohomologies on compact complex manifolds, and we recollect the basis facts about the complex geometry of nilmanifolds. In Section 3, following \cite{Sul2} and \cite{AA}, we recall the notions of $p$-{\em pluriclosed forms} on an almost complex manifold of real dimension $2n$ (in particular, when $J$ is integrable, for $p=1$, respectively $p=n-2$, we recover the notion of SKT metrics, respectively of astheno-K\"ahler metrics). Moreover, we obtain a useful obstruction for the existence of such structures (see Lemma \ref{lemma:tecnico}), which will be used in both Proposition \ref{prop:almost-complex-application} and Theorem \ref{blowup}. Sections 4 and 5 are devoted to the proofs of Theorems  \ref{thm:5dim_astheno} and \ref{blowup}.  In Section $6$, we briefly recall the notions of Aeppli-Bott-Chern-Massey products and geometrically-Bott-Chern-formal metrics. Finally, Section 7 is mainly devoted to the proof of Theorems \ref{thm:SKT_geom_FPS}, \ref{thm:n-dim-SKT-BC-geom}, \ref{thm:product_BC_form}, \ref{thm:4-dim_SKT_BC_form}.

\vskip.2truecm
{\it Acknowledgement.} The authors kindly thank Daniele Angella and Nicoletta Tardini for useful discussions and comments. Many thanks are also due to Anna Fino for reading carefully the manuscript and for useful remarks and to Jonas Stelzig for suggesting some possible further developments.
\section{Preliminaries}\label{sec-notations}
Let $M$ be a compact $2n$-dimensional differentiable manifold endowed with an integrable almost complex structure $J$, i.e., $J\in\textrm{End}(TM)$ such that $J^2=-\id_{TM}$ and the Nijenhuis tensor associated to $J$
\begin{equation*}\label{eq:Nijenhuis}
N_J(X,Y):=[JX,JY]-[X,Y]-J[JX,Y]-J[X,JY]
\end{equation*}
vanishes for every $X,Y\in TM$. By Newlander-Nirenberg, $J$ endows $M$ with the structure of a compact complex manifold of complex dimension $n$. We will denote such manifold by $(M,J)$.

The $\C$-linear extension of the endorphism $J$ to the complexified tangent bundle $T_{\C}M:=TM\otimes \C$ gives rise to following decomposition in terms of the $\pm i$-eigenspaces of $J$
\[
T_\C M=T^{1,0}\oplus T^{0,1}M,
\]
where $T^{1,0}M:=\{X\in T_{\C}M\,:\, JX=iX \}$ and $T^{0,1}M:=\{X \in T_{\C}M\,:\, JX=-iX\}$. Such decomposition of $T_{\C} M$ extends to the exterior powers bundles $\bigwedge_\C^kM:=\bigwedge^k T_\C M$ of complex $k$-forms, that is
\[
\textstyle\bigwedge_{\C}^{k}M=\displaystyle\bigoplus_{p+q=k}\textstyle\bigwedge^{p,q}M,
\]
where $\bigwedge^{p,q}M:=\bigwedge^p T^{1,0}M\otimes \bigwedge^q T^{0,1}M$ is the bundle of {\em $(p,q)$-forms}. We will denote the global sections $\Gamma(M,\bigwedge_\C^kM)$ and $\Gamma(M,\bigwedge^{p,q}M)$  of the mentioned bundles by, respectively, $\mathcal{A}_\C^kM$ and $\mathcal{A}^{p,q}M$.

At the level of $(p,q)$-forms, the exterior differential $d$ acts as
\[
d\colon \mathcal{A}^{p,q}M\rightarrow \mathcal{A}^{p+1,q}M\oplus \mathcal{A}^{p,q+1}M,
\]
therefore, by setting $\del:=\pi^{p+1,q}\circ d$ and $\delbar:=\pi^{p,q+1}\circ d$, we obtain that $d$ splits as $d=\del+\delbar$. Since $d^2=0$, it immediately follows that $\del^2=\delbar^2=0$.

Associated to a compact complex manifold $(M,J)$, one may define the {\em de Rham, Dolbeault, Bott-Chern, and Aeppli} cohomologies,
\begin{align*}
&H_{dR}^{\bullet}(M;\C)=\frac{\Ker d}{\im d},\qquad H_{\delbar}^{\bullet,\bullet}(M)=\frac{\Ker \delbar}{\im \delbar}, \quad H_{BC}^{\bullet,\bullet}(M)=\frac{\Ker d}{\im \del\delbar},\qquad H_A^{\bullet,\bullet}(M)=\frac{\Ker \del\delbar}{\im \del +\im \delbar}.
\end{align*}
We will denote the Betti numbers by $b_k=\dim_{\C}H_{dR}^k(M;\C)$ and the Hodge numbers and the dimensions for the Bott-Chern and Aeppli cohomologies by $h_{\sharp}^{p,q}:=\dim_{\C}H_{\sharp}(M)$, for $\sharp\in\{\delbar,BC,A\}$.

From now on, $(M,J)$ will denote a compact complex manifold of complex dimension $n$.

A {\em Hermitian metric} $g$ on $(M,J)$ is a Riemannian metric on $M$ such that $J$ is an isometry with respect to $g$, i.e., $g(JX,JY)=g(X,Y)$, for every $X,Y\in \Gamma (TM)$. For any given Hermitian metric $g$, we will denote by $F$ the {\em fundamental form} of $g$, defined as $F(X,Y)= g(JX,Y)$ forn any $X,Y\in\Gamma(TM)$.

We will consider the $\C$-antilinear extension of $g$ to $\Gamma(T_\C M)$ given by\newline 
$
g(X\otimes \lambda,Y\otimes \mu):=\lambda\overline{\mu}\,g(X,Y),
$
for every $X\otimes \lambda$, $Y\otimes \mu\in \Gamma(T_\C M)$. Once fixed a Hermitian metric $g$ on $(M,J)$, then the complex cohomology groups recalled above, are isomorphic to the kernel of suitable elliptic self-adjoint operators. More precisely, setting
\begin{eqnarray*}
\Delta_{\delbar}&=& \delbar\,\delbar^* + \delbar^*\delbar\\
\Delta_{BC}&=& \del\delbar\delbar^*\del^*+
\delbar^*\del^*\del\delbar+\delbar^*\del\del^*\delbar+
\del^*\delbar\delbar^*\del+\delbar^*\delbar+\del^*\del\\
\Delta_{A}&=&\del\del^*+\delbar\delbar^*+
\delbar^*\del^*\del\delbar+\del\delbar\delbar^*\del^*+
\del\delbar^*\delbar\del^*+\delbar\del^*\del\delbar^*\,,
\end{eqnarray*}
where as usual $*:\mathcal{A}^{p,q}\to\mathcal{A}^{n-q,n-p}$ is the $\C$-linear Hodge operator defined as 
$$
\alpha\wedge *\overline{\beta} =g(\alpha,\overline{\beta})\frac{\omega^n}{n!}
$$
and $\delbar^*=-*\del *$, $\del^*=-*\delbar *$, and denoting the spaces of {\em Dolbeault harmonic}, respectively {\em Bott-Chern harmonic} and {\em Aeppli harmonic} forms by 
\begin{eqnarray*}
\mathcal{H}^{\bullet,\bullet}_{\Delta_{\delbar}}(M)&=&\{\alpha\in\mathcal{A}^{\bullet,\bullet}(M)\,\,\,\vert\,\,\, \Delta_{\delbar}\alpha=0\}\\[5pt]
\mathcal{H}^{\bullet,\bullet}_{\Delta_{BC}}(M)&=&\{\alpha\in\mathcal{A}^{\bullet,\bullet}(M)\,\,\,\vert\,\,\, \Delta_{BC}\alpha=0\}\\[5pt]
\mathcal{H}^{\bullet,\bullet}_{\Delta_{A}}(M)&=&\{\alpha\in\mathcal{A}^{\bullet,\bullet}(M)\,\,\,\vert\,\,\, \Delta_{A}\alpha=0\},
\end{eqnarray*}
we have the following complex vector spaces isomorphisms
$$
H^{\bullet,\bullet}_{\delbar}(M)\simeq \mathcal{H}^{\bullet,\bullet}_{\Delta_{\delbar}}(M), \quad
H^{\bullet,\bullet}_{BC}(M)\simeq \mathcal{H}^{\bullet,\bullet}_{\Delta_{BC}}(M), \quad
H^{\bullet,\bullet}_{A}(M)\simeq \mathcal{H}^{\bullet,\bullet}_{\Delta_{A}}(M). 
$$
It turns out that 
\begin{equation}
\begin{array}{lll}
\alpha\in\mathcal{H}^{p,q}_{\Delta_{\delbar}}(M)&\iff &
\left\{\begin{array}{ll}
\delbar\alpha &=0\\
\overline{\del}^*\alpha &=0
\end{array}
\right.
\\[30pt]
\alpha\in\mathcal{H}^{p,q}_{\Delta_{BC}}(M)&\iff& 
\left\{\begin{array}{ll}
\del\alpha &=0\\
\delbar\alpha &=0\\
\del^*\delbar^*\alpha &=0
\end{array}
\right.\label{eq:BC_harm_form}
\\[30pt]
\alpha\in\mathcal{H}^{p,q}_{\Delta_{A}}(M)&\iff &
\left\{\begin{array}{ll}
\del\delbar\alpha &=0\\
\del^*\alpha &=0\\
\delbar^*\alpha &=0
\end{array}
\right.
\end{array}
\end{equation}
We recall now some basic facts of complex geometry of {\em nilmanifolds}.
Let $M$ be a nilmanifold, that is $M=\Gamma\backslash G$, where $G$ is a simply connected nilpotent Lie group and $\Gamma$ is a lattice in $G$, with $\dim_\R M=2n$. Let $\mathfrak{g}$ be the Lie algebra of $G$. Then any given left-invariant almost complex structure $J$ on $G$ gives rise to an almost complex structure $J$ on $M$; therefore, any almost complex structure $J$ on $\mathfrak{g}$ gives rise to an almost complex structure on $M$, denoted with the same symbol $J$. According to the Newlander and Nirenberg Theorem, $J$ is a complex structure if and only 
$$
N_J(X,Y)=0\,,\quad \forall X,Y\in\mathfrak{g}.
$$
 Equivalently, an almost complex structure on $\mathfrak{g}$ can be defined by assigning an $n$-dimensional complex subspace $\mathfrak{g}^{1,0}$ of $\mathfrak{g}^*_\C$, such that $\mathfrak{g}^{1,0}\cap\overline{\mathfrak{g}^{1,0}}=\{0\}$. We set, as usual, $$\textstyle\bigwedge^{p,q}\mathfrak{g}:=\bigwedge^p\mathfrak{g}^{1,0}\otimes \bigwedge^q\overline{\mathfrak{g}^{1,0}}.$$ Then it turns out that a nilpotent Lie algebra $\mathfrak{g}$ has a complex structure if and only if there exists a basis $\{\eta^1,\ldots,\eta^n\}$ of $\mathfrak{g}^{1,0}$ such that 
$$
d\eta^{k+1}\in\mathcal{I}(\eta^{1},\ldots,\eta^k)\,
$$
where $\mathcal{I}(\eta^{1},\ldots,\eta^k)$ denotes the ideal generated by $\{\eta^{1},\ldots,\eta^i\}$ in $\bigwedge^*\mathfrak{g}^*_\C$ (see \cite[Theorem 1.3]{Sal}) and $d$ denotes the extension of the Chevalley-Eilenberg exterior differential on the Lie algebra $\mathfrak{g}$ to $\bigwedge^*\mathfrak{g}_\C^*$.
\section{$p$-pluriclosed structures}\label{p-PL}
We review the notion of positive forms on almost complex manifolds. We start by recalling some preliminary linear algebra notions.
Let $V$ be a real $2n$-dimensional vector space endowed with a complex structure $J$, that is an endomorphism $J$ of $V$ satisfying $J^2=-\hbox{\rm id}_V$. Denote by $V^*$ be the dual space of $V$ and denote by the same symbol the complex structure on $V^*$ naturally induced by $J$ on $V$. Then the complexified 
$V^{*\C}$ splits as the direct sum of the $\pm\,i$-eigenspaces, $V^{1,0}$, $V^{0,1}$ of the extension of $J$ to $V^{*\C}$, given by
$$
\begin{array}{l}
V^{1,0}=\{\eta\in V^{*\C}\,\,\,\vert\,\,\,J\eta=i\eta\}= 
\{\alpha-iJ\alpha \,\,\,\vert\,\,\,\alpha\in V^{*}\}\\[10pt]
V^{0,1}=\{\psi\in V^{*\C}\,\,\,\vert\,\,\,J\psi=-i\psi\}= 
\{\beta+iJ\beta \,\,\,\vert\,\,\,\beta\in V^{*}\},
 \end{array}
$$
that is 
$$V^{*\C}=V^{1,0}\oplus V^{0,1}.
$$
According to the above decomposition, the space $\bigwedge^r(V^{*\C})$ of complex $r$-covectors on $V^\C$ 
decomposes as 
$$
\textstyle\bigwedge^r(V^\C)=\displaystyle\bigoplus_{p+q=r}\textstyle\bigwedge^{p,q}(V^{*\C}),
$$
where 
$$
\textstyle\bigwedge^{p,q}(V^{*\C})=\bigwedge^p(V^{1,0})\otimes\bigwedge^q(V^{0,1}).
$$
If $\{\eta^1,\ldots,\eta^{n}\}$ is a basis of $V^{1,0}$, then
$$
\{\eta^{i_1}\wedge\cdots\wedge\eta^{i_p}\wedge\overline{\eta^{j_1}}\wedge\cdots\wedge\overline{\eta^{j_q}}\,\,\,\vert\,\,\, 1\leq i_1<\cdots<i_p\leq n,\,\,
1\leq j_1<\cdots<j_q\leq n\}
$$
is a basis of $\bigwedge^{p,q}(V^{*\C})$. Set $\sigma_p=i^{p^2}2^{-p}$. Then, given any 
$\eta\in\bigwedge^{p,0}(V^{*\C})$ we have that
$$
\overline{\sigma_p\eta\wedge\overline{\eta}}=\sigma_p\eta\wedge\overline{\eta},
$$
that is $\sigma_p\eta\wedge\overline{\eta}$ is a $(p,p)$-real form. Consequently, denoting by
$$
\textstyle\bigwedge_{\R}^{p,p}(V^{*\C})=\{\psi\in\bigwedge^{p,p}(V^{*\C})\,\,\,\vert\,\,\,\psi=\overline{\psi}\},
$$
we get that 
$$
\{\sigma_p\eta^{i_1}\wedge\cdots\wedge\eta^{i_p}\wedge\overline{\eta^{i_1}}\wedge\cdots\wedge\overline{\eta^{i_p}}\,\,\,\vert\,\,\, 1\leq i_1<\cdots<i_p\leq n\}
$$
is a basis of $\bigwedge_{\R}^{p,p}(V^{*\C})$. By definition, $\psi\in\bigwedge^{p,0}(V^{*\C})$ is said to be {\em simple} or {\em decomposable} if 
$$
\psi=\eta^1\wedge\cdots\wedge\eta^p,
$$
for suitable $\eta^1,\ldots,\eta^p\in V^{1,0}$.
\begin{rem}
The complex structure $J$ acts on the space of real $k$-covectors $\bigwedge^k(V^*)$ by setting, for any given $\alpha\in \bigwedge^k(V^*)$,
$$J\alpha (V_1,\ldots,V_k)=\alpha(JV_1,\ldots,JV_k).
$$
Then it is immediate to check that if $\psi\in\bigwedge_{\R}^{p,p}(V^{*\C})$ then $J\psi=\psi$. For $k=2$, the converse holds.
\end{rem}
Set
$$
\hbox{\rm Vol}=(\frac{i}{2}\eta^1\wedge\overline{\eta^1})\wedge\cdots\wedge
(\frac{i}{2}\eta^n\wedge\overline{\eta^n});
$$
then  

$$
\hbox{\rm Vol}=\sigma_n\eta^1\wedge\cdots \wedge\eta^n\wedge \overline{\eta^1}\wedge\cdots\wedge
\wedge\overline{\eta^n},
$$
that is $\hbox{\rm Vol}$ is a volume form on $V$. A real $(n,n)$-form $\psi$ is said to be {\em positive} respectively {\em strictly positive} if 
$$\psi=a{\rm Vol},
$$ 
where $a\geq 0$, respectively $a>0$.  \newline 
Let $\Omega\in\bigwedge_{\R}^{p,p}(V^{*\C})$. Then $\Omega$ is said to be {\em weakly positive} if given any non-zero simple $(n-p)$-covector $\eta$, 
the real $(n,n)$-form 
$$
\Omega\wedge\sigma_{n-p}\eta\wedge \overline{\eta}
$$
is positive. The real $(p,p)$-form $\Omega$ is said to be {\em transverse} if, given any non-zero simple $(n-p)$-covector $\eta$, the real $(n,n)$-form 
$$
\Omega\wedge\sigma_{n-p}\eta\wedge \overline{\eta}
$$
is strictly positive.



The notion of positivity on complex vector spaces can be transferred pointwise to almost complex 
manifolds. Let $(M,J)$ be an almost complex manifold of real dimension $2n$; let $\bigwedge^{p,q}(M)$ be the bundle of $(p,q)$-forms on $(M,J)$. Denote by $A^{p,q}(M):=\Gamma(M, \Lambda_J^{p, q} M)$ the space of {\em $(p,q)$-forms} on $(M, J)$ and by 
$$A_{\R}^{p,p}(M):=\{\psi\in A^{p,q}(M)\,\,\,\vert\,\,\, \psi=\overline{\psi}\}$$ 
the space of {\em real $(p,p)$-forms}.
Then the exterior differential $d$ satisfies 
$$
d(A^{p, q}(M)) \subset A_J^{p + 2, q - 1}(M) + A_J^{p + 1, q}(M) + A_J^{p, q + 1}(M) + A_J^{p-1, q + 2}(M),
$$
and, consequently, $d$ decomposes as
$$
d = \mu_J + \partial_J + \overline{\partial}_J + \overline{\mu}_J,
$$
where $\mu_J = \pi^{p + 2, q - 1} \circ d$, $\overline{\partial}_J = \pi^{p, q + 1} \circ d$. Set 
$$
d^c=J^{-1}dJ.
$$
\begin{definition}\label{p-PL-def}
 Let $(M,J)$ be an almost complex manifold of real dimension $2n$ and let $1\leq p\leq n$. A 
 $p$-{\em pluriclosed form} on $(M,J)$ is a real $dd^c$-closed transverse $(p,p)$-form $\Omega$, that is 
 $\Omega$ is $dd^c$-closed and, at every $x\in M$, $\Omega_x\in\bigwedge^{p,p}_{\R}(T_x^*M)$ is transverse. The triple $(M,J,\Omega)$ is said to be an {\em almost p-pluriclosed manifold}.
\end{definition}

Let $(M,J)$ be an $n$-dimensional complex manifold and $g$ be a Hermitian metric with fundamental form $F$. Then $d^c=i(\delbar-\del)$ and consequently 
$dd^c=2i\del\delbar$.
\begin{definition} The Hermitian metric
$g$ is said to be {\em astheno-K\"ahler} in the terminolgy by Jost and Yau \cite{JY} if 
$$\del\delbar F^{n-2}=0;$$ 
$g$ is said to be {\em strong K\"ahler with torsion}, shortly {\em SKT}, if 
$$
\del\delbar F=0.
$$ 
\end{definition}

Therefore, if $g$ is an astheno-K\"ahler metric respectively SKT metric on $(M,J)$, then $(M,J,F^{n-2})$ respctively $(M,J,F)$ is an $(n-2)$-pluriclosed respectively $1$-pluriclosed manifold. 

Following Gauduchon \cite{Ga}, a Hermitian metric
$g$ on  $(M, J)$ is said to be {\em standard} if $F^{n -1}$ is
$\partial \overline\partial$-closed. As observed in \cite{FT}, if a Hermitian metric on a $4$-dimensional compact complex manifold is at the same time SKT
and astheno-K\"ahler, then it must be also standard. 
Furthermore, in \cite[Lemma 6]{JY} a necessary condition for the
existence of astheno-K\"ahler metrics on compact complex manifolds
was provided, showing that any given holomorphic
$1$-form must be $d$-closed.

In order to recall the characterization Theorem of compact complex manifolds admitting a $p$-pluriclosed structure, we review some known facts on
positive currents. Let $M$ be an $n$-dimensional complex manifold and let
$A^{p,q}(\Omega)$ respectively ${\mathcal
D}^{p,q}(\Omega)$) be the space of $(p,q)$-forms respectively
$(p,q)$-forms with compact support on $M$. Consider the ${\mathcal C}^\infty$-topology on ${\mathcal
D}^{p,q}(M)$. The {\em space of currents} of {\em bi-dimension}
$(p,q)$ or of {\em bi-degree} $(n-p,n-q)$ is the topological dual
${\mathcal D}'_{p,q}(M)$ of ${\mathcal D}^{p,q}(M)$. A
current of bi-dimension $(p,q)$ on $M$ can be identified with
a $(n-p,n-q)$-form on $M$ with coefficients distributions. A
current $T$ of bi-dimension $(p,p)$ is said to be {\em real} if
$T(\eta) =T(\overline{\eta})$, for any $\eta\in {\mathcal
D}^{p,q}(M)$. A real current $T\in{\mathcal D}'_{p,p}(M)$ is said to be {\em strongly positive}
if,
$$
T(\Omega)
\geq 0,
$$
for every weakly positive $(p,p)$-form $\Omega$. We have the following
(see 
\cite[Theorem 2.4,(4)]{A2})
\begin{theorem}
A compact $n$-dimensional manifold $N$ has a strictly weakly positive $(p,p)$-form $\Omega$ with $\del\delbar\Omega=0$ if and only if $N$ has no strongly positive currents $T\neq 0$ of bidimension $(p,p)$, such that $T=i\del\delbar A$ for some current $A$ of bidimension $(p+1,p+1)$.  
\end{theorem}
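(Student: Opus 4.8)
The statement has the form of a duality/separation theorem, so the plan is to prove it by a Hahn--Banach argument in the space of currents, in the spirit of Sullivan and Harvey--Lawson. Throughout I work on the compact manifold $N$, so that forms and currents need no compactness-of-support hypotheses, and I equip the space $\mathcal{D}'_{p,p}(N)$ of currents of bidimension $(p,p)$ with the weak-$*$ topology, whose topological dual is the space $A^{p,p}(N)$ of smooth $(p,p)$-forms. The two objects to be separated are the convex cone $\mathcal{C}\subset\mathcal{D}'_{p,p}(N)$ of strongly positive currents and the linear subspace $\mathcal{B}:=\{\,i\del\delbar A : A\in\mathcal{D}'_{p+1,p+1}(N)\,\}$; the hypothesis is precisely that $\mathcal{C}\cap\mathcal{B}=\{0\}$.

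The easy implication is a direct pairing computation. If $\Omega$ is a transverse (strictly weakly positive) $\del\delbar$-closed $(p,p)$-form and $T=i\del\delbar A\neq 0$ were strongly positive, then integration by parts gives $\langle T,\Omega\rangle=\langle i\del\delbar A,\Omega\rangle=\pm\,\langle A, i\del\delbar\Omega\rangle=0$ because $\del\delbar\Omega=0$. On the other hand, transversality of $\Omega$ means exactly that $\Omega$ lies in the interior of the cone dual to $\mathcal{C}$, so that $\langle T,\Omega\rangle>0$ for every nonzero strongly positive $T$, a contradiction. This already shows that the existence of $\Omega$ forbids such currents.

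For the converse---the substantive direction---I would produce $\Omega$ as a separating functional. First I would exhibit a compact base for $\mathcal{C}$: fixing any strictly positive $(p,p)$-form $\rho$, the slice $B:=\{\,T\in\mathcal{C}: \langle T,\rho\rangle=1\,\}$ is convex, does not contain $0$, and is weak-$*$ compact by Banach--Alaoglu together with the uniform mass bound that strong positivity forces on $B$. Since $B\subset\mathcal{C}$ and $\mathcal{C}\cap\mathcal{B}=\{0\}$ with $0\notin B$, we get $B\cap\mathcal{B}=\emptyset$. The key technical input is that $\mathcal{B}$ is weak-$*$ closed; this I would derive from the ellipticity of the Bott--Chern and Aeppli Laplacians on the compact complex manifold $N$, equivalently from the finite-dimensionality of the corresponding cohomologies, which makes the range of $i\del\delbar$ closed. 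Granting this, Hahn--Banach separation of the compact convex set $B$ from the closed subspace $\mathcal{B}$ yields a weak-$*$ continuous functional---hence a smooth $(p,p)$-form $\Omega$---that is strictly positive on $B$ and vanishes on $\mathcal{B}$.

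It then remains to read off the two required properties of $\Omega$. Vanishing on $\mathcal{B}$ means $\langle A,i\del\delbar\Omega\rangle=0$ for all currents $A$ of bidimension $(p+1,p+1)$, which forces $\del\delbar\Omega=0$. Strict positivity on $B$, hence on all of $\mathcal{C}\setminus\{0\}$ by homogeneity, says that $\langle T,\Omega\rangle>0$ for every nonzero strongly positive current $T$; since the strongly positive currents are dual to the weakly positive forms, this is exactly the statement that $\Omega$ is transverse. I expect the main obstacle to be the closedness of $\mathcal{B}$: compactness of the base $B$ is a soft consequence of mass bounds, but verifying that the range of $i\del\delbar$ on currents is weak-$*$ closed requires the Hodge-theoretic machinery for the fourth-order Laplacians, and one must also check that the separating functional is genuinely represented by a smooth form rather than by a merely distributional one.
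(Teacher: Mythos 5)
The paper does not prove this statement: it is quoted verbatim from Alessandrini \cite[Theorem 2.4, (4)]{A2}, so there is no internal proof to compare against. Your Hahn--Banach separation argument is, however, exactly the strategy used in that line of work (Sullivan, Harvey--Lawson, Alessandrini--Andreatta, Alessandrini), and your outline is correct: the easy direction by pairing a transverse $\del\delbar$-closed form against a nonzero strongly positive $\del\delbar$-exact current, and the converse by separating a weak-$*$ compact base $B$ of the cone $\mathcal{C}$ of strongly positive currents from the subspace $\mathcal{B}=i\del\delbar\,\mathcal{D}'_{p+1,p+1}(N)$, then reading off transversality and $\del\delbar$-closedness of the resulting smooth form. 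You also correctly identify the two genuine pressure points. First, the weak-$*$ closedness of $\mathcal{B}$: this does follow from the elliptic theory of the fourth-order Bott--Chern Laplacian, since the Bott--Chern decomposition $\mathcal{D}'^{n-p,n-p}=\mathcal{H}_{BC}\oplus\im\del\delbar\oplus(\im\del^{*}+\im\delbar^{*})$ extends to currents by elliptic regularity and exhibits $\im\del\delbar$ as a closed direct summand; this is where compactness and the complex structure really enter. Second, the representation of the separating functional by a smooth form is automatic because the dual of $\mathcal{D}'$ with the weak-$*$ topology is the space of smooth forms. Two small points of care: the slicing form $\rho$ used to define $B$ should be transverse (strictly weakly positive) rather than ``strictly positive'' in the paper's sense, which is reserved for $(n,n)$-forms, so that $\langle T,\rho\rangle$ controls the mass of $T\in\mathcal{C}$ and every nonzero element of $\mathcal{C}$ rescales into $B$; and the final step, that $\langle T,\Omega\rangle>0$ for all nonzero $T\in\mathcal{C}$ forces pointwise transversality, should be justified by testing against Dirac-type currents $\sigma_{n-p}\eta\wedge\overline{\eta}\,\delta_{x}$, which are strongly positive.
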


We end this section by proving a simple yet useful lemma, which yields an obstruction to the existence of $p$-pluriclosed forms on a closed almost complex manifold.
\begin{lemma}\label{pL-sufficiente}
Let $(M,J)$ be a closed almost complex manifold of real dimension $2n$.  Let $\alpha$ be a $(2n-2p-2)$-form which is not $dd^c$-closed and such that $$(dd^c\alpha)^{n-p,n-p}=\sum c_k\psi^k\wedge\overline{\psi}^k,$$
with $\psi^k$  simple $(n-p,0)$-covectors and $c_k\neq 0$ constants having the same sign. Then $(M,J)$ does not admit a $p-$pluriclosed form.

In particular,
\begin{itemize}
\item for $p=1$, $(M,J)$ does not admit SKT metrics;
\item for $p=n-2$, $(M,J)$ does not admit astheno-K\"ahler metrics.
\end{itemize}
\begin{proof}
We prove this lemma by contradiction. Suppose there exists a $p$-pluriclosed form $\Omega$ on $(M,J)$, i.e., $\Omega$ is a $(p,p)$-real form which is $dd^c$-closed and, for every $x\in M$, $\Omega_x\in\bigwedge^{p,p}(T_xM^*)$ is trasverse. Then, let $\alpha$ be a $(2n-2p-2)$-form on $(M,J)$ as above and let us assume, for example, that each $c_k>0$. Since $M$ is closed, by Stokes theorem we have that
$$
0=\int_M d(d^c(\sigma_n\Omega\wedge\alpha))=\int_M \sigma_n \Omega \wedge dd^c\alpha=\sum _k c_k \int_M \sigma_n \Omega \wedge\psi^k\wedge\overline{\psi}^k>0,
$$
which is a contradiction. To end the proof, notice that if $F$ is an astheno K\"ahler metric on $(M,J)$, the $(n-2,n-2)$-form $F^{n-2}$ is a $(n-2)$-pluriclosed form on $(M,J)$. Analogously, if $F$ is a SKT metric on $(M,J)$, $F$ is $1$-pluriclosed form on $(M,J)$.
\end{proof}
\end{lemma}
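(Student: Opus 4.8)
The plan is a proof by contradiction. Suppose $(M,J)$ carried a $p$-pluriclosed form $\Omega$, i.e.\ a real transverse $(p,p)$-form with $dd^c\Omega=0$. Since $M$ is closed, the exact top-degree form $dd^c(\Omega\wedge\alpha)=d\big(d^c(\Omega\wedge\alpha)\big)$ integrates to zero, so $\int_M dd^c(\Omega\wedge\alpha)=0$. The whole argument consists in evaluating this integral in two incompatible ways.

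First I would establish the ``self-adjointness'' identity $\int_M\Omega\wedge dd^c\alpha=\int_M dd^c\Omega\wedge\alpha$. This is obtained by moving $d$ and then $d^c$ off $\alpha$ and onto $\Omega$: the $d$-step is Stokes, $\int_M d(\Omega\wedge d^c\alpha)=0$, while the $d^c$-step uses that $\int_M d^c\gamma=0$ for every $(2n-1)$-form $\gamma$ --- indeed $J$ acts trivially on $2n$-forms (its determinant there is $1$), so $d^c\gamma=J^{-1}d(J\gamma)=d(J\gamma)$ is $d$-exact --- together with the anticommutation $dd^c=-d^cd$. (Equivalently, one expands $dd^c(\Omega\wedge\alpha)$ by the Leibniz rule and checks that the two cross terms $d^c\Omega\wedge d\alpha$ and $d\Omega\wedge d^c\alpha$ integrate to zero.) Because $\Omega$ is $dd^c$-closed the right-hand side vanishes, whence $\int_M\Omega\wedge dd^c\alpha=0$.

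Next I would recompute the same quantity from the hypothesis on $\alpha$. As $\Omega$ has type $(p,p)$, only the $(n-p,n-p)$-component of $dd^c\alpha$ contributes to the top-degree wedge, so $\int_M\Omega\wedge dd^c\alpha=\sum_k c_k\int_M\Omega\wedge\psi^k\wedge\overline{\psi}^k$. Here transversality of $\Omega$ is the decisive input: for each nonzero simple $(n-p,0)$-covector $\psi^k$ the real $(n,n)$-form $\Omega\wedge\sigma_{n-p}\psi^k\wedge\overline{\psi}^k$ is a strictly positive multiple of the volume form. Consequently, once the normalization constants are matched, each summand is a fixed nonzero real multiple of $\int_M\mathrm{Vol}$, all of the same sign since the $c_k$ share a common sign and at least one $\psi^k\neq0$ (this nonvanishing is exactly what the non-$dd^c$-closedness of $\alpha$ secures). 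Thus $\int_M\Omega\wedge dd^c\alpha$ is strictly positive or strictly negative, contradicting the vanishing just obtained; hence no $p$-pluriclosed $\Omega$ exists.

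I expect the only delicate point to be this last positivity step, namely keeping the sign bookkeeping between the normalizing constants $\sigma_n,\sigma_{n-p}$ and the common sign of the $c_k$ completely explicit, so that the two evaluations are genuinely incompatible; the integration-by-parts identity is routine once $\int_M d^c\gamma=0$ is recorded. For the two itemized consequences nothing further is needed: an SKT metric gives the transverse $dd^c$-closed $(1,1)$-form $\Omega=F$, a $1$-pluriclosed form, and an astheno-K\"ahler metric gives the transverse $dd^c$-closed $(n-2,n-2)$-form $\Omega=F^{n-2}$, an $(n-2)$-pluriclosed form, so both non-existence statements are immediate contrapositives of the main assertion with $p=1$ and $p=n-2$.
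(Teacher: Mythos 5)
Your proposal is correct and follows essentially the same route as the paper: a proof by contradiction in which $\int_M \Omega\wedge dd^c\alpha$ is shown to vanish by Stokes' theorem and the $dd^c$-closedness of $\Omega$, yet to be strictly of one sign by transversality and the sign hypothesis on the $c_k$. The only difference is that you spell out the integration-by-parts step (moving $dd^c$ from $\alpha$ onto $\Omega$) which the paper compresses into a single line.
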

\begin{rem}
In Lemma \ref{pL-sufficiente} the thesis on the non existence of Hermitian metrics satisfying $dd^cF=0$, for $p=1$, respectively $dd^cF^{n-2}=0$, for $p=n-2$, is still valid, without assuming the integrability of $J$.
\end{rem}
\section{Astheno-K\"ahler metrics on $5$-dimensional nilmanifolds}
We now proceed to construct a family of nilmanifolds of complex dimension $5$ endowed with a left-invariant complex structure admitting an astheno-K\"ahler metric.

Let $\{\eta^1,\ldots ,\eta^5\}$ be the set of complex forms of
type $(1,0)$, such that
\begin{equation}\label{complex-structure-equations}
\left\{ \begin{array}{lcl}
d \eta^ j &=&0, \, j = 1,\ldots,4, \\[5pt]
d \eta^ 5 &=& a_1 \, \eta^{12}+a_2 \, \eta^{13}+a_3 \, \eta^{13}+a_4 \, \eta^{1\bar{1}}+a_5 \, \eta^{1\bar{2}}+a_6 \, \eta^{1\bar{3}}+a_7 \, \eta^{1\bar{4}}\\[5pt]
&& + b_1 \, \eta^{23} + b_2 \, \eta^{24}+ b_3 \, \eta^{2\bar{1}}+ b_4 \, \eta^{2\bar{2}}+ b_5 \, \eta^{2\bar{3}}+ b_6 \, \eta^{2\bar{4}}\\[5pt]
&& + c_1 \, \eta^{34} +c_2 \, \eta^{3\bar{1}}+c_3 \, \eta^{3\bar{2}}+c_4 \, \eta^{3\bar{3}}+c_5 \, \eta^{3\bar{4}}\\[5pt]
&& + d_1 \, \eta^{4\bar{1}} +d_2 \, \eta^{4\bar{2}}+d_3 \, \eta^{4\bar{3}}+d_4 \, \eta^{4\bar{4}}
\end{array}
\right.
\end{equation}
where $a_h,b_k,c_r,c_s \in \C$, $h = 1, \ldots, 7$, $k = 1, \ldots, 6$, $r = 1, \ldots, 5$, $h = 1, \ldots, 4$ and we set as usual $\eta^{AB}=\eta^A\wedge\eta^B$. Then, setting $\mathfrak{g}^{1,0}=\Span\langle\eta^1,\ldots,\eta^5\rangle$, we obtain that 
$\mathfrak{g}_\C^*=\mathfrak{g}^{1,0}\oplus\overline{\mathfrak{g}^{1,0}}$ gives rise to an integrable almost complex structure $J$ on the real $2$-step nilpotent Lie algebra $\mathfrak{g}$. Let $G$ be the simply-connected and connected Lie group with Lie algebra $\mathfrak{g}$. 
Then, for any given choice of parameters $a_h,b_k,c_r,c_s \in \mathbb{Q}[i]$ as a consequence of Malcev's Theorem \textcolor{red}{\cite{Mal}}, there exist lattices $\Gamma\subset G$, so that $(M=\Gamma\backslash G,J)$ is a nilmanifold endowed with a complex structure $J$ with $\dim_\C M=5$. We have the following
\begin{theorem}\label{thm:5dim_astheno}
Let $M=\Gamma\backslash G$ and $J$ be the complex structure on $M$ defined by (\ref{complex-structure-equations}). Then
\begin{enumerate}
 \item[\hbox{\rm I)}] The diagonal metric $g$ on $(M,J)$ whose fundamental form is $$F=\frac{i}{2}\sum_{h=1}^5\eta^{h\bar{h}}$$  is astheno-K\"ahler  if and only if the following condition holds
 \begin{equation}\label{eq:condition_AK_general}
 \begin{array}{lll}
2\Real\,(d_4\bar{a}_4 + d_4\bar{b}_4+d_4\bar{c}_4+c_4\bar{a}_4+c_4\bar{b}_4+b_4\bar{a}_4 )&=&
\vert a_1\vert^2+\vert a_2\vert^2+\vert a_3\vert^2+\vert a_5\vert^2+\vert a_6\vert^2+\vert a_7\vert^2 +\\ [10pt]
&+&\vert b_1\vert^2+\vert b_2\vert^2+\vert b_2\vert^2+\vert b_5\vert^2+\vert b_6\vert^2+\\[10pt]
&+&\vert c_1\vert^2+\vert c_2\vert^2+\vert c_3\vert^2+\vert d_1\vert^2+\vert d_2\vert^2
\end{array}
\end{equation}
\item[\hbox{\rm II)}] Let $$a_2=a_3=a_5=a_6=a_7=b_1=b_2=b_3=b_5=b_6=c_2=c_3=c_5=d_1=d_2=d_3=0$$
Then the metric $g$ 
satisfies $dd^cF^3=0$ and $dd^cF^2=0$ if and only if the following conditions hold
\begin{equation}\label{eq:condition_AK_particular}
\left\{
\begin{array}{l}
2\,\Real\,(d_4\bar{a}_4+d_4\bar{b}_4+d_4\bar{c}_4+c_4\bar{a}_4 + c_4\bar{b}_4+b_4\bar{a}_4 )=\vert a_1\vert^2+\vert c_1\vert^2\\[10pt]
 2\,\Real\,(c_4\bar{a}_4 + c_4\bar{b}_4+b_4\bar{a}_4 )=\vert a_1\vert^2\\[10pt]
 \Real\,(c_4\bar{b}_4 - d_4\bar{a}_4)= 0\\[10pt]
  \Real\,(b_4\bar{d}_4 - c_4\bar{a}_4)= 0
 \end{array}
\right.
\end{equation}
\end{enumerate}
\end{theorem}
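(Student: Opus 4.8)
The plan is to reduce the entire statement to a finite linear–algebra computation in $\bigwedge^{\bullet,\bullet}\mathfrak g^*_\C$. Since $F$, and hence every power $F^k$, is left-invariant, a left-invariant form vanishes on $M=\Gamma\backslash G$ if and only if it vanishes as an element of $\bigwedge^{\bullet,\bullet}\mathfrak g^*_\C$; thus $\partial\bar\partial F^k=0$ can be checked purely on $\mathfrak g$. Writing $\omega_h=\tfrac i2\eta^{h\bar h}$, one has $F=\sum_{h=1}^5\omega_h$, and since the $\omega_h$ are real $(1,1)$-forms that commute under $\wedge$ with $\omega_h\wedge\omega_h=0$, we get $F^{k}=k!\sum_{|I|=k}\omega_I$ with $\omega_I=\bigwedge_{h\in I}\omega_h$. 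The decisive feature of (\ref{complex-structure-equations}) is that $\eta^1,\dots,\eta^4$ are closed while only $\eta^5$ is not; hence every $\omega_I$ with $5\notin I$ is $d$-closed and is annihilated by $\partial\bar\partial$. First I would record, from (\ref{complex-structure-equations}), the $(2,0)$-part $\partial\eta^5=\sum_{i<j}P_{ij}\,\eta^{ij}$ and the $(1,1)$-part $\bar\partial\eta^5=\sum_{i,j}Q_{i\bar j}\,\eta^{i\bar j}$ (indices in $\{1,\dots,4\}$), where $(P_{ij})$ and $(Q_{i\bar j})$ are exactly the coefficients $a_\bullet,b_\bullet,c_\bullet,d_\bullet$. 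Because $d^2\eta^5=0$ and $\eta^1,\dots,\eta^4$ are closed, both $\partial\eta^5$ and $\bar\partial\eta^5$ are $d$-closed, and a Leibniz computation yields the single surviving block
\begin{equation*}
B:=\partial\bar\partial(\eta^{5\bar5})=\bar\partial\eta^5\wedge\overline{\bar\partial\eta^5}-\partial\eta^5\wedge\overline{\partial\eta^5},
\end{equation*}
a $(2,2)$-form whose entries involve only $\eta^1,\dots,\eta^4$ and their conjugates.

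For part I, since $\partial\bar\partial$ kills every closed $\omega_I$, only the six terms of $F^3$ divisible by $\omega_5$ survive, so that up to a nonzero constant $\partial\bar\partial F^3=\sum_{\{p,q\}\subset\{1,\dots,4\}}\omega_p\wedge\omega_q\wedge B$. The key observation is that wedging the complementary pair $\omega_p\wedge\omega_q$ against $B$ annihilates every off-diagonal term $\eta^{ab}\wedge\bar\eta^{cd}$ of $B$ and retains only the diagonal one $\eta^{rs}\wedge\bar\eta^{rs}$ with $\{r,s\}=\{1,\dots,4\}\setminus\{p,q\}$; moreover each such product equals the \emph{same} nonzero multiple of $\eta^{1234}\wedge\overline{\eta^{1234}}$. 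Hence $\partial\bar\partial F^3$ lies in the one–dimensional space $\C\cdot\eta^{1234}\wedge\overline{\eta^{1234}}$, and the astheno-K\"ahler condition is the single real equation stating that the sum over all six pairs of the diagonal coefficients of $B$ vanishes. Extracting the coefficient of $\eta^{rs}\wedge\bar\eta^{rs}$ in $B$ as $2\Real\big(Q_{r\bar r}\overline{Q_{s\bar s}}\big)-|Q_{r\bar s}|^2-|Q_{s\bar r}|^2-|P_{rs}|^2$ and summing over the pairs reproduces (\ref{eq:condition_AK_general}), the real parts assembling the left-hand side and the squared moduli the right-hand side.

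For part II, after imposing the stated vanishing one has $\partial\eta^5=a_1\eta^{12}+c_1\eta^{34}$ and $\bar\partial\eta^5=a_4\eta^{1\bar1}+b_4\eta^{2\bar2}+c_4\eta^{3\bar3}+d_4\eta^{4\bar4}$; substituting into the part I formula immediately gives the first equation of (\ref{eq:condition_AK_particular}) as the condition $\partial\bar\partial F^3=0$. For $\partial\bar\partial F^2=0$ the same mechanism gives, up to a nonzero constant, $\partial\bar\partial F^2=\sum_{m=1}^4\omega_m\wedge B$; now every surviving term is a diagonal $(3,3)$-form $\omega_S$ with $S\subset\{1,\dots,4\}$, $|S|=3$, while the two off-diagonal pieces $\eta^{12}\wedge\bar\eta^{34}$ and $\eta^{34}\wedge\bar\eta^{12}$ of $B$ drop out for lack of a free index to host $\omega_m$. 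By linear independence of the four forms $\omega_S$, vanishing is equivalent to the four relations $\sum_{\{i,j\}\subset S}\beta_{ij}=0$, one per $3$-subset $S$, where $\beta_{ij}$ denotes the diagonal coefficient of $\eta^{ij}\wedge\bar\eta^{ij}$ in $B$, namely $\beta_{12}=2\Real(a_4\overline{b_4})-|a_1|^2$, $\beta_{34}=2\Real(c_4\overline{d_4})-|c_1|^2$, and $\beta_{ij}=2\Real\big(Q_{i\bar i}\overline{Q_{j\bar j}}\big)$ otherwise. A short manipulation — their total is twice $\sum_{i<j}\beta_{ij}$, recovering the first equation, while suitable differences isolate $2\Real(a_4\overline{d_4})=2\Real(b_4\overline{c_4})$ and $2\Real(a_4\overline{c_4})=2\Real(b_4\overline{d_4})$ — shows this system is equivalent to the four equations displayed in (\ref{eq:condition_AK_particular}).

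The main obstacle is bookkeeping rather than conceptual: tracking the signs produced when reordering the factors $\eta^i,\bar\eta^j,\eta^5,\bar\eta^5$, correctly identifying which components of $B$ are diagonal in the relevant complementary indices, and carrying out the linear repackaging that turns the four balanced relations $\sum_{\{i,j\}\subset S}\beta_{ij}=0$ into the asymmetric form of (\ref{eq:condition_AK_particular}). The conceptual simplification making the argument short is that, because only $\eta^5$ fails to be closed, every derivative collapses onto the single block $B=\partial\bar\partial(\eta^{5\bar5})$, so that both $\partial\bar\partial F^3$ and $\partial\bar\partial F^2$ reduce to wedging $B$ against products of the commuting real forms $\omega_h$.
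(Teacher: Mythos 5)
Your argument is correct, and it is worth saying that it does genuinely more than the paper's own proof, which is not a hand computation at all: the authors report the output of a Sagemath calculation for $dd^cF^3$ (formula \eqref{ddc-5-dim}) and dispose of the $dd^cF^2$ case with ``a straightforward computation''. What you supply is the structure that makes the computation humanly verifiable: since $\eta^1,\dots,\eta^4$ are closed, every summand of $F^k=k!\sum_{|I|=k}\omega_I$ not containing $\omega_5=\tfrac i2\eta^{5\bar5}$ is killed by $\partial\bar\partial$, so everything collapses onto the single block $B=\partial\bar\partial(\eta^{5\bar5})=\bar\partial\eta^5\wedge\overline{\bar\partial\eta^5}-\partial\eta^5\wedge\overline{\partial\eta^5}$ (the other two Leibniz terms vanish because $\partial\eta^5$ and $\bar\partial\eta^5$ are constant-coefficient combinations of closed forms, hence $d$-closed), and $\partial\bar\partial F^3$, resp.\ $\partial\bar\partial F^2$, is obtained by pairing $B$ against the complementary $\omega_p\wedge\omega_q$, resp.\ $\omega_m$. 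I have checked that the sign conventions work out uniformly (each $\omega_p\wedge\omega_q\wedge\eta^{rs\bar r\bar s}$ is the \emph{same} multiple of $\eta^{1234\overline{1234}}$), that your diagonal coefficient $\beta_{rs}=2\Real\,(Q_{r\bar r}\overline{Q_{s\bar s}})-|Q_{r\bar s}|^2-|Q_{s\bar r}|^2-|P_{rs}|^2$ is right, and that the four relations $\sum_{\{i,j\}\subset S}\beta_{ij}=0$ for $|S|=3$ (together with the $F^3$ condition, which is in fact half their sum) repackage exactly into \eqref{eq:condition_AK_particular} via the sums and differences you indicate. One concrete discrepancy: for the pair $\{r,s\}=\{3,4\}$ your formula produces the additional terms $|c_5|^2+|d_3|^2$ (the off-diagonal coefficients of $\eta^{3\bar4}$ and $\eta^{4\bar3}$) on the right-hand side of \eqref{eq:condition_AK_general}; these are absent from the printed equation, which also shows the evident typo $|b_2|^2+|b_2|^2$ in place of $|b_2|^2+|b_3|^2$. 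By the symmetry of your computation with the terms $|a_5|^2,\dots$ that do appear, this looks like an omission in the paper's transcription rather than a flaw in your derivation, and it is harmless for part II, where $c_5=d_3=0$.
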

\begin{proof}
As for I), with the aid of Sagemath and structure equations (\ref{complex-structure-equations}), it is easy to the see that
\begin{equation}\label{ddc-5-dim}
\begin{array}{lll}
\frac{2}{3}dd^c F^{3}&=&\left(2\Real\,(d_4\bar{a}_4 + d_4\bar{b}_4+d_4\bar{c}_4+c_4\bar{a}_4+c_4\bar{b}_4+b_4\bar{a}_4 )\right.\\[10pt]
&{}&-\vert a_1\vert^2-\vert a_2\vert^2-\vert a_3\vert^2-\vert a_5\vert^2-\vert a_6\vert^2-\vert a_7\vert^2\\[10pt]
&{}&-\vert b_1\vert^2-\vert b_2\vert^2-\vert b_3\vert^2-\vert b_5\vert^2-\vert b_6\vert^2\\[10pt]
&{}&\left.-\vert c_1\vert^2-\vert c_2\vert^2-\vert c_3\vert^2-\vert d_1\vert^2-\vert d_2\vert^2 \,\,\right)\,\eta^{1234\overline{1234}},
\end{array}
\end{equation}
i.e., the metric $F$ is astheno-K\"ahler on $(M,J)$ if and only if \eqref{eq:condition_AK_general} holds.\newline
II) Under the assumption 
$$a_2=a_3=a_5=a_6=a_7=b_1=b_2=b_3=b_5=b_6=c_1=c_2=c_3=c_5=d_1=d_2=d_3=0,$$
taking into account \eqref{ddc-5-dim} and by a straightforward computation, we obtain that 
$$
dd^cF^3=0,\quad dd^c F^2=0
$$
if and only if 
$$
\left\{
\begin{array}{l}
2\Real \,(d_4\bar{a}_4 + d_4\bar{b}_4+d_4\bar{c}_4+c_4\bar{a}_4+c_4\bar{b}_4+b_4\bar{a}_4) -\vert a_1\vert^2-\vert c_1\vert^2=0\\ [10pt]
2\Real(c_4\bar{a}_4+c_4\bar{b}_4+b_4\bar{a}_4)-\vert a_1\vert^2=0\\ [10pt]
2\Real(d_4\bar{a}_4+d_4\bar{b}_4+b_4\bar{a}_4)-\vert a_1\vert^2=0\\ [10pt]
2\Real(d_4\bar{a}_4+d_4\bar{c}_4+c_4\bar{a}_4)-\vert c_1\vert^2=0\\ [10pt]
2\Real(d_4\bar{b}_4+d_4\bar{c}_4+c_4\bar{b}_4)-\vert c_1\vert^2=0.
\end{array}
\right.
$$
The last system is equivalent to \eqref{eq:condition_AK_particular}.
\end{proof}
\begin{rem} In \cite[p. 185]{STos} the authors asked for an example of a non-K\"ahler compact complex manifold which
admits both balanced and astheno-K\"ahler metrics. In \cite{FGV}, and independently in \cite{LU}, the authors constructed explicit examples of such manifolds in any dimension. As a direct application of Theorem \ref{thm:5dim_astheno}, we obtain families of $5$-dimensional complex nilmanifolds carrying both astheno-K\"ahler and balanced metrics. We apply a similar construction as in \cite[Remark 2.6]{LU}. Let
$$
\hat{F}=\frac{i}{2}(A\eta^{1\bar{1}}+\eta^{2\bar{2}}+\eta^{3\bar{3}}+\eta^{4\bar{4}}+\eta^{5\bar{5}})\,
$$
where $A$ is a positive real number. Then $d\hat{F}=0$ if and only if 
\begin{equation}\label{balaced-hat-F}
a_4+Ab_4+A c_4 +Ad_4=0,
\end{equation}
where $a_4,b_4,c_4,d_4$ are the parameters as in \eqref{complex-structure-equations}
Let $g$ be the diagonal metric whose fundamental form is 
$$
F=\frac{i}{2}(\eta^{1\bar{1}}+\eta^{2\bar{2}}+\eta^{3\bar{3}}+\eta^{4\bar{4}}+\eta^{5\bar{5}})
$$
Then, according to I) of Theorem \ref{thm:5dim_astheno}, $g$   
is astheno-K\"ahler if and only if condition \eqref{eq:condition_AK_general} holds. \newline
Take
$$
a_4=-\frac{1}{10}(1+2i),\qquad b_4=i,\qquad c_4=i,\qquad d_4=1,\qquad A=\frac{1}{10}
$$
Then, with this choice of parameters, we obtain
$$
a_4+Ab_4+A c_4 +Ad_4=-\frac{1}{10}-\frac{1}{5}i+\frac{1}{10}i+\frac{1}{10}i+\frac{1}{10}=0,
$$
that is \eqref{balaced-hat-F} is satified and so, for such a choice of parameters, $\hat{F}$ gives rise to a balaced metric on $M=\Gamma\backslash G$. A straightforward calculation yields to
$$
2\Real\,(d_4\bar{a}_4 + d_4\bar{b}_4+d_4\bar{c}_4+c_4\bar{a}_4+c_4\bar{b}_4+b_4\bar{a}_4 )=1.
$$
Therefore, the Hermitian metric $g$ is astheno-K\"ahler if and only if  
condition \eqref{eq:condition_AK_general} reads as 
\begin{equation}\label{eq:condition-F}
 \begin{array}{lll}
1&=&
\vert a_1\vert^2+\vert a_2\vert^2+\vert a_3\vert^2+\vert a_5\vert^2+\vert a_6\vert^2+\vert a_7\vert^2 +\\ [10pt]
&+&\vert b_1\vert^2+\vert b_2\vert^2+\vert b_2\vert^2+\vert b_5\vert^2+\vert b_6\vert^2+\\[10pt]
&+&\vert c_1\vert^2+\vert c_2\vert^2+\vert c_3\vert^2+\vert d_1\vert^2+\vert d_2\vert^2\,.
\end{array}
\end{equation}
One can check that there exist solutions in $\mathbb{Q}[i]$ of equation \eqref{eq:condition-F}, so that, for any given solution, the associated complex nilmanifold defined as in \eqref{complex-structure-equations} admits both a balanced metric and an astheno-K\"ahler metric.
\end{rem}
As an application of Lemma \ref{pL-sufficiente}, we provide a family of compact almost complex nilmanifolds without $2$-pluriclosed forms.
\begin{prop}\label{prop:almost-complex-application}
Let $\{\psi^1,\ldots ,\psi^4\}$ be the set of complex forms of
type $(1,0)$, such that
\begin{equation}\label{almost-complex-structure-equations}
\left\{ \begin{array}{lcl}
d \psi^ j &=&0, \quad j = 1,\ldots,3, \\[10pt]
d \psi^ 4 &=& a_1 \, \psi^{12}+a_2 \, \psi^{23}+a_3 \, \psi^{1\bar{1}}+ a_4 \, \psi^{2\bar{2}}+ a_5 \, \psi^{3\bar{3}}+a_6 \, \psi^{\bar{1}\bar{2}}+a_7 \, \psi^{\bar{2}\bar{2}},
\end{array}
\right.
\end{equation}
where $a_1,\ldots,a_7 \in \mathbb{Q}[i]$. Let $G$ be the corresponding simply-connected and connected nilpotent Lie group and $\Gamma\subset G$ be a lattice such that $N=\Gamma\backslash G$ is a compact nilmanifold. Assume that 
\begin{equation}
a_1\overline{a_2}+\overline{a_6}a_7=0 
\end{equation}
and set $a=(a_1,\ldots,a_7)$. Then $(N,J_a)$ does not admit any $2$-pluriclosed form.
\end{prop}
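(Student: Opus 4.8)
My plan is to apply Lemma \ref{pL-sufficiente} with $n=4$ and $p=2$. Then $2n-2p-2=2$, so the auxiliary form $\alpha$ must be a $2$-form and the object to control is the $(n-p,n-p)=(2,2)$-part of $dd^c\alpha$. Since $\psi^1,\psi^2,\psi^3$ and their conjugates are $d$-closed, on left-invariant forms $d$ acts non-trivially only through $\psi^4$ and $\overline{\psi^4}$; in particular $dd^c$ of any left-invariant $2$-form is proportional to $dd^c\psi^{4\bar4}$, so it is enough to test the real form $\alpha=\frac{i}{2}\,\psi^{4\bar4}$ in the Lemma. As a preliminary step I would record the bidegree decomposition $d\psi^4=\mu_J\psi^4+\del_J\psi^4+\delbar_J\psi^4+\overline{\mu}_J\psi^4$ and abbreviate its three nonzero components by $P:=\del_J\psi^4=a_1\psi^{12}+a_2\psi^{23}$, $Q:=\delbar_J\psi^4=a_3\psi^{1\bar1}+a_4\psi^{2\bar2}+a_5\psi^{3\bar3}$ and $R:=\overline{\mu}_J\psi^4=a_6\psi^{\bar1\bar2}+a_7\psi^{\bar2\bar3}$, with $\mu_J\psi^4=0$ (here I read the printed $a_7\psi^{\bar2\bar2}$ as $a_7\psi^{\bar2\bar3}$, the former being identically zero, so that the hypothesis is non-vacuous).

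Next I would compute $dd^c\alpha$ from $d^c=J^{-1}dJ$, using throughout that $\psi^1,\psi^2,\psi^3$ and their conjugates are closed, so that the exterior derivative survives only on $\psi^4$ and $\overline{\psi^4}$. Tracking the eigenvalues of $J^{-1}$ on each bidegree, the $(2,2)$-part collapses to $(dd^c\alpha)^{2,2}=P\wedge\overline P-Q\wedge\overline Q+R\wedge\overline R$. Expanding each wedge in the standard monomial basis of $\bigwedge^{2,2}$, the only non-diagonal monomials produced are $\psi^{12}\wedge\overline{\psi^{23}}$ and its conjugate $\psi^{23}\wedge\overline{\psi^{12}}$, and a direct check shows that the coefficient of $\psi^{12}\wedge\overline{\psi^{23}}$ is exactly $a_1\overline{a_2}+\overline{a_6}a_7$, that of its conjugate being the complex conjugate of this quantity.

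At this point the hypothesis $a_1\overline{a_2}+\overline{a_6}a_7=0$ is used precisely to annihilate these two non-diagonal terms, leaving $(dd^c\alpha)^{2,2}=\sum_k c_k\,\psi^k\wedge\overline{\psi^k}$ where each $\psi^k$ is one of the simple $(2,0)$-covectors $\psi^{12},\psi^{13},\psi^{23}$ and each $c_k\in\R$; explicitly the three diagonal coefficients come out as $|a_1|^2+|a_6|^2-2\Real(a_3\overline{a_4})$, $-2\Real(a_3\overline{a_5})$ and $|a_2|^2+|a_7|^2-2\Real(a_4\overline{a_5})$. To close the argument I would then verify that the $c_k$ are not all zero, so that $\alpha$ is genuinely not $dd^c$-closed, and that they share a common sign; Lemma \ref{pL-sufficiente} would then apply verbatim and give that $(N,J_a)$ admits no $2$-pluriclosed form. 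I expect this sign-definiteness check, rather than the cross-term cancellation, to be the main obstacle: the $(1,1)$-part $\delbar_J\psi^4$ feeds the a priori indefinite quantities $-2\Real(a_3\overline{a_4})$, $-2\Real(a_3\overline{a_5})$, $-2\Real(a_4\overline{a_5})$ into the diagonal, and it is the interplay of these with the nonnegative terms $|a_1|^2+|a_6|^2$ and $|a_2|^2+|a_7|^2$ (together with the hypothesis) that must be shown to preserve a single sign.
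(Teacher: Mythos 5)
Your plan is the paper's plan: test $\alpha=\psi^{4\bar{4}}$ (up to a constant) in Lemma \ref{pL-sufficiente} with $p=2$, and your reading of the printed $a_7\psi^{\bar{2}\bar{2}}$ as $a_7\psi^{\bar{2}\bar{3}}$ is the right one, as the paper's own cross term $(a_1\overline{a_2}+\overline{a_6}a_7)\psi^{12\bar{2}\bar{3}}$ confirms. The divergence is in the outcome of the computation. The paper's displayed formula for $\frac{i}{2}dd^c\psi^{4\bar{4}}$ is exactly your $P\wedge\overline{P}+R\wedge\overline{R}$, with \emph{no} contribution from the $(1,1)$-part $Q=a_3\psi^{1\bar{1}}+a_4\psi^{2\bar{2}}+a_5\psi^{3\bar{3}}$; after the hypothesis kills the cross terms, the two surviving diagonal coefficients $|a_1|^2+|a_6|^2$ and $|a_2|^2+|a_7|^2$ are manifestly nonnegative and the Lemma applies at once. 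Your computation produces the additional summand $-Q\wedge\overline{Q}=-2\Real(a_3\overline{a_4})\,\psi^{12\bar{1}\bar{2}}-2\Real(a_3\overline{a_5})\,\psi^{13\bar{1}\bar{3}}-2\Real(a_4\overline{a_5})\,\psi^{23\bar{2}\bar{3}}$, and as far as I can check yours is the correct one: it agrees with $dd^c=2i\del\delbar$ in the integrable case (where $\del\delbar\psi^{4\bar{4}}=Q\wedge\overline{Q}-P\wedge\overline{P}$), and it is precisely the mechanism the paper itself uses in Section 5, where $dd^c\alpha^{3\bar{3}}\neq 0$ for $d\alpha^3=a_4\alpha^{1\bar{1}}+c_4\alpha^{2\bar{2}}$ because of the term $2\Real(a_4\overline{c_4})$.

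The consequence is that the step you flag as ``the main obstacle'' is a genuine gap, and it cannot be closed under the stated hypothesis alone: the three diagonal coefficients $|a_1|^2+|a_6|^2-2\Real(a_3\overline{a_4})$, $-2\Real(a_3\overline{a_5})$, $|a_2|^2+|a_7|^2-2\Real(a_4\overline{a_5})$ need not share a sign. For instance $a_3=a_4=1$, $a_5=-1$ and all other $a_i=0$ satisfies $a_1\overline{a_2}+\overline{a_6}a_7=0$ but yields coefficients $-2,\,+2,\,+2$, so Lemma \ref{pL-sufficiente} does not apply to $\psi^{4\bar{4}}$, and by your own reduction no other left-invariant $2$-form helps. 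So your proposal does not prove the Proposition as stated; what it does reveal is that the paper's formula for $\frac{i}{2}dd^c\psi^{4\bar{4}}$ omits the $Q\wedge\overline{Q}$ terms, and that the conclusion requires either an extra hypothesis forcing a common sign (e.g.\ $a_3=a_4=a_5=0$, or $\Real(a_i\overline{a_j})\leq 0$ for $3\leq i<j\leq 5$, together with nonvanishing of at least one coefficient so that $\psi^{4\bar{4}}$ is genuinely not $dd^c$-closed) or a different argument.
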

\begin{proof}
A straightforward calculation using \eqref{almost-complex-structure-equations} yields to 
$$
\begin{array}{lll}
\frac{i}{2}dd^c\psi^{4\bar{4}}&=&(\vert a_1\vert^2+\vert a_6\vert^2)\psi^{12\bar{1}\bar{2}}+
(\vert a_2\vert^2+\vert a_7\vert^2)\psi^{23\bar{2}\bar{3}}+
(a_1\overline{a_2}+\overline{a_6}a_7)\psi^{12\bar{2}\bar{3}}\\[10pt]
&{}& +(a_2\overline{a_1}+\overline{a_7}a_6)\psi^{23\bar{1}\bar{2}}\\[10pt]
&=&(\vert a_1\vert^2+\vert a_6\vert^2)\psi^{12\bar{1}\bar{2}}+
(\vert a_2\vert^2+\vert a_7\vert^2)\psi^{23\bar{2}\bar{3}}
\end{array}
$$
The thesis follows immediately from Lemma \ref{pL-sufficiente}.
\end{proof}
\begin{rem}
For any given $a$ such that $(a_6,a_7)\neq (0,0)$, $J_a$ is a non integrable almost complex structure on $M$. Consequently, for such an $a$, $(M,J_a)$ is an almost complex manifold with no $2$-pluriclosed forms.
\end{rem}

\section{Blow-ups of astheno-K\"ahler metrics}
By classical results and more recent ones, (see \cite{Bl,Voi,A2,FT2}), we know that, for compact complex manifolds, the property of admitting, respectively, K\"ahler, balanced, or SKT metrics, is stable blow up either in a point or along a compact complex submanifold. Regarding astheno-K\"ahler metrics, in \cite{FT}, it is proved the following result.
\begin{prop}$($\cite[Proposition 2.4]{FT}$)$
Let $(M,J,g)$ be an astheno-K\"ahler manifold of complex dimension $n$ such that its fundamental $2$-form $F$ satisfies
\begin{equation}\label{eq:cond_finotomm}
dd^cF=0, \quad dd^cF^2=0.
\end{equation}
Then both the blow-up $\tilde{M}_p$ of $M$ at a point $p\in M$ and the blow-up $\tilde{M}_Y$ of $M$ along a compact complex submanifold $Y$ admit an astheno-K\"ahler metric satisfying \eqref{eq:cond_finotomm} too.
\end{prop}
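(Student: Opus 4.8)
The plan is to pull the metric back under the blow-down map and correct it near the exceptional locus by a closed positive form, exactly as in the classical proof that blow-ups of K\"ahler manifolds are K\"ahler; the two conditions \eqref{eq:cond_finotomm} will survive the procedure because they are, in a precise sense, self-propagating. Write $\pi\colon\tilde M\to M$ for the blow-up (treating the point case as $Y=\{p\}$) and let $E=\pi^{-1}(Y)$ be the exceptional divisor. The first observation I would record, recalling that $dd^c=2i\del\delbar$, is the purely algebraic fact that the two hypotheses already control all powers of $F$: since $\delbar F^2=2F\wedge\delbar F$ one gets $\del\delbar F^2=2\,\del F\wedge\delbar F+2F\wedge\del\delbar F$, so that $dd^cF=0$ together with $dd^cF^2=0$ forces $\del F\wedge\delbar F=0$; feeding this into $\del\delbar F^k=k(k-1)F^{k-2}\wedge\del F\wedge\delbar F+kF^{k-1}\wedge\del\delbar F$ yields $dd^cF^k=0$ for every $k\ge 1$. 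In particular the astheno-K\"ahler equation $dd^cF^{n-2}=0$ is automatic from \eqref{eq:cond_finotomm}, and, more importantly, the same implication will let me reduce the verification on $\tilde M$ to checking only the two bottom conditions.

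Second, I would invoke the standard geometric input from the K\"ahler blow-up construction: there is a smooth real $d$-closed $(1,1)$-form $\Omega$ on $\tilde M$, representing (up to sign) $c_1(\mathcal{O}(E))$, which is strictly positive along the fibres of $E\to Y$, i.e. along the directions collapsed by $\pi$ (for a point this is the tautological $\mathcal{O}(-1)$ over $\mathbb{P}^{n-1}$, in general $\mathbb{P}(N_Y)$). Being a closed real $(1,1)$-form, $\Omega$ satisfies $\del\Omega=\delbar\Omega=0$, hence also $d^c\Omega=0$. I then set $\tilde F=\pi^*F+\varepsilon\,\Omega$ for a small parameter $\varepsilon>0$. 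Positivity of $\tilde F$ is a pointwise linear-algebra step in which closedness plays no role: away from $E$ the form $\pi^*F$ is already a positive Hermitian form dominating $\varepsilon\Omega$ on the relevant compact set, while in a neighbourhood of $E$ the degeneracy of $\pi^*F$ occurs exactly in the fibre directions where $\varepsilon\Omega$ is positive, so a block-dominance argument gives $\tilde F>0$ for $\varepsilon$ small. Thus $\tilde F$ is the fundamental form of a Hermitian metric on $\tilde M$.

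It then remains to verify the differential conditions, which is now formal. Because $\Omega$, and hence each $\Omega^j$, is both $d$- and $d^c$-closed, for any $m$ one has $dd^c((\pi^*F)^m\wedge\Omega^j)=\pi^*(dd^cF^m)\wedge\Omega^j$. Applying this to $\tilde F$ gives $dd^c\tilde F=\pi^*(dd^cF)=0$, and expanding $\tilde F^2=(\pi^*F)^2+2\varepsilon\,\pi^*F\wedge\Omega+\varepsilon^2\Omega^2$ gives $dd^c\tilde F^2=\pi^*(dd^cF^2)=0$, the cross terms vanishing since $\Omega$ and $\Omega^2$ are annihilated by $d^c$. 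By the propagation identity of the first paragraph, now applied to $\tilde F$, these two equalities already force $dd^c\tilde F^{k}=0$ for all $k$, in particular the astheno-K\"ahler equation $dd^c\tilde F^{n-2}=0$; hence $\tilde F$ satisfies \eqref{eq:cond_finotomm} on $\tilde M$, as required, and the two cases (point and submanifold) are handled uniformly.

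I expect the main obstacle to be the construction and the positivity of the correction form $\Omega$ near $E$: producing a globally $d$-closed $(1,1)$-form that is genuinely fibre-positive in a full neighbourhood of the exceptional divisor, and verifying the block-dominance estimate uniformly, requires the explicit local model of the blow-up and a little care at the transition region. By contrast, once such a closed $\Omega$ is available, the self-propagation of the conditions and their behaviour under $\pi^*$ are entirely computational.
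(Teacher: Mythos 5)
The paper does not actually prove this proposition --- it is quoted verbatim from \cite[Proposition 2.4]{FT} --- and your argument is, in substance, the one given in that reference: the identity $\del\delbar F^2=2\,\del F\wedge\delbar F+2F\wedge\del\delbar F$ shows that \eqref{eq:cond_finotomm} forces $\del F\wedge\delbar F=0$ and hence $dd^cF^k=0$ for every $k$, and the standard $d$-closed correction form $\Omega$ from the K\"ahler/SKT blow-up construction then gives $\tilde F=\pi^*F+\varepsilon\Omega$ with $dd^c\bigl((\pi^*F)^m\wedge\Omega^j\bigr)=\pi^*(dd^cF^m)\wedge\Omega^j=0$. Your computations are correct and the one genuinely technical point (existence and fibre-positivity of $\Omega$, and positivity of $\tilde F$ for small $\varepsilon$) is exactly the part that both \cite{FT} and the present paper delegate to the classical blow-up literature, so nothing essential is missing.
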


In this section, we will show that blowups of astheno-K\"ahler metrics do not preserve additional differential properties of the metric, namely we construct an example of a $5$-dimensional manifold $M$ admitting a metric $F$ satisfying
\begin{equation}\label{metric-blowup}
dd^cF^2=0,\quad dd^cF^3=0,
\end{equation}
and we will consider the blowup of such manifold along a submanifold. We will prove that such blowup does not admit any Hermitian metric $\tilde{F}$ which satisfies $dd^c\tilde{F}^2=0$ and $dd^c\tilde{F}^3=0$.

We note that if $dd^cF=0$, conditions \eqref{eq:cond_finotomm} of \cite{FT} would be verified, thus yielding stability. Therefore, when we consider a Hermitian metric $F$ which satisfies weaker conditions than \eqref{eq:cond_finotomm}, e.g., the astheno-K\"ahler condition and the differential condition $dd^cF^{n-3}=0$, in general such conditions are not stable under blowups


Now, we construct a family of $5$-dimensional compact complex nilmanifolds endowed with a Hermitian metric whose fundamental form $F$ satisfies \
	\eqref{metric-blowup} and
such that the blowup of $M$ along a suitable $3$-dimensional complex submanifold $Y$ has no Hermitian metrics satisfying \eqref{metric-blowup}. To this purpose, 
we start by considering the following nilpotent Lie group $G:=(\C^5,\ast)$, where the operation $\ast$ is defined for every $w=(w_1,w_2,w_3,w_4,w_5),z=(z_1,z_2,z_3,z_4,z_5)\in \C^5$ by
\begin{align*}
&w\ast
z:=\\&(w_1+z_1,w_2+z_2+w_3+z_3,w_4+z_4,z_5+a_1w_1z_2+a_4\overline{w}_1z_1+b_4\overline{w}_2z_2+c_1w_3z_4+c_4\overline{w}_3z_3+d_4\overline{w}_4z_4+w_5),
\end{align*}
with $a_1,a_4,b_4,c_1,c_4,d_4\in \q[i]$. We can then consider the following forms on $G$
\begin{equation*}
\begin{cases}
\eta^i=dz_i, \quad i\in\{1,2,3,4\}\\[10pt]
\eta^5=dz_5-a_1z_1dz_2-a_4\overline{z}_1dz_1-b_4\overline{z}_2dz_2-c_1z_3dz_4-c_4\overline{z}_3dz_3-d_4\overline{z}_4dz_4.
\end{cases}
\end{equation*}
It can be easily seen that $\{\eta^1,\dots,\eta^5\}$ are left invariant global forms on $G$ with structure equations
\begin{equation*}
\begin{cases}
d\eta^i=0, \quad i\in\{1,2,3,4\}\\
d\eta^5=-a_1\eta^{12}+a_4\eta^{1\overline{1}}+b_4\eta^{2\overline{2}}-c_1\eta^{34}+c_4\eta^{3\overline{3}}+d_4\eta^{4\overline{4}}.
\end{cases}
\end{equation*}
The dual left invariant complex vectors fields $\{Z_1,Z_2,Z_3,Z_4,Z_5\}$ on $G$ are given by
\begin{equation*}
\begin{cases}
Z_1=\frac{\del}{\del z_1}+a_4\overline{z}_1\frac{\del}{\del z_5}\\[10pt]
Z_2=\frac{\del}{\del z_2}(a_1z_1+b_4\overline{z}_2)\frac{\del}{\del z_5}\\[10pt]
Z_3=\frac{\del}{\del z_3}+c_4\overline{z}_3\frac{\del}{\del z_5}\\[10pt]
Z_4=\frac{\del}{\del z_4}+(c_1z_3+d_4\overline{z}_4)\frac{\del}{\del z_5}\\[10pt]
Z_5=\frac{\del}{\del z_5}.
\end{cases}
\end{equation*}
We note that $T_{\C}G=\langle Z_1,\dots,Z_5,\overline{Z}_1,\dots,\overline{Z}_5\rangle$ and the distribution $D=\langle Z_1,\dots,Z_5\rangle$ is integrable. Therefore, if we denote by $J$ the almost complex structure on $G$ for which $\{Z_1,\dots,Z_5\}$ is a frame of $(1,0)$-vector fields and $\{\eta^1,\dots,\eta^5\}$ is a coframe of $(1,0)$-forms, then $J$ is an integrable left invariant almost complex structure on $G$.

Since the constant structures $a_1,a_4,b_4,c_1,c_4,d_4$ are numbers in $\q[i]$, Malcev theorem assures the existence of a discrete uniform subgroup $\Gamma$ such that $M:=\Gamma\backslash G$ is a compact nilmanifold. In particular, since $J$ is left invariant on $G$, it descends to $M$, i.e., $(M,J)$ is a complex $5$-dimensional nilmanifold. In particular $\{Z_1,\dots,Z_5\}$ and $\{\eta^1,\dots,\eta^5\}$ are a global left invariant frame of $(1,0)$-vector fields, respectively $(1,0)$-forms on $M$.

In particular, we point out that $M$ is the nilmanifold associated to the complex Lie algebra $\mathfrak{g}$ of Section 4, with structure constants
\[
a_2=a_3=a_5=a_6=a_7=b_1=b_2=b_3=b_5=b_6=c_1=c_2=c_3=c_5=d_1=d_2=d_3=0.
\]
If we denote by $$p\colon G\rightarrow M$$ the natural quotient projection from $G$ to $\Gamma\backslash G$ and we set $$Y_0:=\{(z_1,z_2,z_3,z_4,z_5):z_2=z_4=0\}\subset G,$$ then $p(Y_0)=:Y\subset M$ is a compact complex $3$-dimensional submanifold of $M$ whose complexified tangent bundle $T_{\C}Y$ is spanned by $\{Z_1,Z_3,Z_5,\overline{Z}_1,\overline{Z}_3,\overline{Z}_5\}$.

It is immediate to check that $Y$ is a $3$-dimensional nilmanifold and $\{\eta^1,\eta^3,\eta^3\}$ is a global coframe of $(1,0)$-forms on $Y$ with complex structure equations given by
\begin{equation}\label{eq:struct_eq_Y}
\begin{cases}
d\eta^1=0,\\[5pt]
d\eta^3=0\\[5pt]
d\eta^5=a_4\eta^{1\overline{1}}+c_4\eta^{3\overline{3}}.
\end{cases}
\end{equation}
For the convenience of the reader, we set $\alpha^1:=\eta^1$, $\alpha^2:=\eta^3$, and $\alpha^3:=\eta^5$, so that we can rewrite \eqref{eq:struct_eq_Y} as
\begin{equation}
\begin{cases}
d\alpha^1=0,\\[5pt]
d\alpha^2=0\\[5pt]
d\alpha^3=a_4\alpha^{1\overline{1}}+c_4\alpha^{2\overline{2}}.
\end{cases}
\end{equation}
Now fix the following constant structures
\begin{equation*}
a_1=-1-3i,\quad a_4=1,\quad b_4=1,\quad c_1=-4,\quad c_4=2,\quad d_4=2,
\end{equation*}
and consider the metric
\[
F=\frac{i}{2}\sum_{j=1}^5\eta^j\wedge\eta^{\overline{5}}.
\] 
For such choice of coefficients, by Theorem \ref{thm:5dim_astheno}, we have that
\[
dd^cF^2=0, \quad dd^cF^3=0, \quad dd^cF\neq 0.
\]
Now, let us consider the blowup $\pi\colon \tilde{M}_Y\rightarrow M$ of $M$ along the compact complex submanifold $Y$, with $E$ the exceptional divisor. We note that $E$ has complex dimension $4$, since each fiber $\pi^{-1}(y)\subset\tilde{M}_Y$ over a point $y\in Y$ has dimension $1$ and $\dim_{\C}Y=3$.

By contradiction, now let us assume that the astheno-K\"ahler condition $dd^cF^3=0$ and the condition $dd^cF^2=0$ are stable, i.e., there exists a Hermitian metric on $\tilde{M}_Y$ such that $dd^c\tilde{F}^3=0$ and $dd^c\tilde{F}^2=0$.

Then, the restriction of $\tilde{F}$ on $E$ gives rise to a Asteno-K\"ahler metric on $E$, that is $dd^c(\tilde{F}\restrict{E})^2=0$, i.e., $E$ is a $2$-pluriclosed manifold.

We now recall the following useful proposition by Alessandrini (\cite[Proposition 3.1]{A3}), adapted here to the setting of $p$-pluriclosed manifolds.
\begin{prop}\label{prop:alessandrini}
Let $M$ and $N$ be connected compact complex manifolds, with $\dim N =
n > m = \dim M \geq 1$, and let $f\colon N\rightarrow M$ be a holomorphic submersion, where $a := n-m =
\dim f^{-1}(x), x \in M$, is the dimension of the standard fibre $F$.
If $N$ is $p$-pluriclosed for some
$p$, $a <p \leq n - 1$, then $M$ is $(p - a)$-pluriclosed.
\end{prop}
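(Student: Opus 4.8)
\section*{Proof proposal}

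The plan is to produce the desired $(p-a)$-pluriclosed form on $M$ by \emph{integration along the fibres}. Since $N$ is compact and $f\colon N\to M$ is a holomorphic submersion, every fibre $f^{-1}(x)$ is a compact complex submanifold of dimension $a$, so the fibre-integration (pushforward) operator $f_*$ is well defined and lowers bidegree by $(a,a)$; in particular, if $\Omega$ denotes the given $p$-pluriclosed form on $N$, then $\omega:=f_*\Omega$ is a $(p-a,p-a)$-form on $M$. Because $a<p\le n-1$, we have $1\le p-a\le m-1$, so $\omega$ lives in an admissible bidegree for a $(p-a)$-pluriclosed form on $M$. It then remains to check that $\omega$ is real, $dd^c$-closed, and transverse, and these three verifications organise the argument.

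Realness and $dd^c$-closedness are formal. Fibre integration commutes with complex conjugation, so $\overline{\omega}=f_*\overline{\Omega}=f_*\Omega=\omega$. For the differential condition, the identity $f_*d=df_*$ (valid since the fibre is closed and of even real dimension $2a$, so no sign intervenes) together with a comparison of bidegrees on a holomorphic submersion yields the separate identities $f_*\del=\del f_*$ and $f_*\delbar=\delbar f_*$. Consequently $dd^c\omega=2i\,\del\delbar f_*\Omega=2i\,f_*\del\delbar\Omega=f_*(dd^c\Omega)=0$, using that $\Omega$ is $dd^c$-closed.

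The heart of the argument is transversality of $\omega$, and this is the step I expect to require the most care. Fix $x\in M$ and a nonzero simple covector $\mu$ on $T_xM$ of the complementary degree $m-(p-a)$; the key bookkeeping observation is that $m-(p-a)=n-p$, so $\mu$ is a simple $(n-p,0)$-covector. Since $f$ is a submersion, $f^*$ is injective on covectors and multiplicative on wedge products, hence $f^*\mu$ is again a nonzero simple $(n-p,0)$-covector at every point of the fibre. Using the projection formula together with $\sigma_{n-p}\,f^*\mu\wedge\overline{f^*\mu}=f^*\!\left(\sigma_{n-p}\,\mu\wedge\overline{\mu}\right)$, I would write
\[
\omega\wedge\sigma_{n-p}\,\mu\wedge\overline{\mu}
= f_*\!\left(\Omega\wedge\sigma_{n-p}\,f^*\mu\wedge\overline{f^*\mu}\right).
\]
By transversality of $\Omega$ on $N$ applied to the simple covector $f^*\mu$, the $(n,n)$-form inside the pushforward is strictly positive at every point of $f^{-1}(x)$; integrating a strictly positive top form over the compact fibre produces a strictly positive multiple of the volume form of $M$ at $x$. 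Hence $\omega\wedge\sigma_{n-p}\,\mu\wedge\overline{\mu}>0$ for every nonzero simple $(n-p,0)$-covector $\mu$, which is precisely transversality of $\omega$.

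The main obstacle is this last step: one must verify carefully that fibre integration sends strictly positive $(n,n)$-forms to strictly positive $(m,m)$-forms (a pointwise local computation in adapted coordinates $(z,w)$ with $w$ the fibre variables), and that the degree identity $m-(p-a)=n-p$ makes the transversality of $\Omega$ directly applicable to $f^*\mu$. Once these positivity and degree facts are in place, the projection formula assembles them into the transversality of $\omega$, and together with the realness and $dd^c$-closedness already established this shows that $\omega$ is a $(p-a)$-pluriclosed form, so $M$ is $(p-a)$-pluriclosed.
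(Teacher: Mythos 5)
The paper does not actually prove this proposition: it is recalled verbatim (only translated into the $p$-pluriclosed terminology) from Alessandrini \cite[Proposition 3.1]{A3}, so there is no in-paper argument to compare yours against. On its own merits, your fibre-integration proof is correct, and the three points you isolate are precisely the ones that need checking. They all go through: since $N$ is compact the submersion is proper, $f_*$ shifts bidegree by $(-a,-a)$, and comparing bidegrees in $f_*d=df_*$ gives $f_*\partial=\partial f_*$ and $f_*\overline{\partial}=\overline{\partial}f_*$ with no sign issues because the fibres have even real dimension $2a$; the identity $m-(p-a)=n-p$ together with the injectivity of $f^*$ on covectors at a submersion makes $f^*\mu$ a nonzero simple $(n-p,0)$-covector at each point of the fibre, so the transversality of $\Omega$ applies directly; and the fibre integral of a form that is a strictly positive multiple of the volume form at each point of a nonempty compact fibre is strictly positive, which is the local computation in adapted coordinates you flag. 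It is worth noting that an alternative, "dual" route is suggested by the characterization the paper recalls from \cite[Theorem 2.4,(4)]{A2}: if $M$ were not $(p-a)$-pluriclosed there would exist a nonzero strongly positive current $T=i\partial\overline{\partial}A$ of bidimension $(p-a,p-a)$ on $M$, and its pullback under the submersion would be a nonzero strongly positive $\partial\overline{\partial}$-exact current of bidimension $(p,p)$ on $N$, contradicting the hypothesis; your direct pushforward of forms avoids currents altogether and is, if anything, more elementary.
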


Let us consider the map $\pi\restrict{E}\colon E\rightarrow Y$. We note that $\pi\restrict{E}$ is a holomorphic submersion with $1$-dimensional fibers, therefore by Proposition \ref{prop:alessandrini}, we have that $Y$ is $1$-pluriclosed, i.e., it admits a SKT metric.

However, this is absurd by either the characterization of $3$-dimensional SKT nilmanifolds by \cite{FPS}, or Lemma \ref{pL-sufficiente}, observing that $dd^c(-\alpha^{3\overline{3}})=8\alpha^{12\overline{12}}$.
\vskip.2truecm\noindent
Summing up, we have proved the following
\begin{theorem}\label{blowup} On a compact complex manifold of dimension $n$, the existence of a Hermitian metric $F$ such that 
	$$dd^cF^{n-2}=0,\qquad dd^cF^{n-3}=0
	$$
is not preserved by blowup. 
\end{theorem}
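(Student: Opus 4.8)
The statement is one of non-preservation, so the plan is to exhibit a single explicit counterexample rather than argue in general. The natural place to work is complex dimension $n=5$, where the two conditions read $dd^cF^3=0$ and $dd^cF^2=0$. First I would construct a compact nilmanifold $(M,J)$ of complex dimension $5$ carrying a left-invariant complex structure of the form \eqref{complex-structure-equations}, specialized to the vanishing pattern of part II) of Theorem \ref{thm:5dim_astheno}, and equip it with the diagonal metric $F=\frac{i}{2}\sum_{h=1}^5\eta^{h\bar h}$. By that theorem, $F$ satisfies $dd^cF^3=0$ and $dd^cF^2=0$ exactly when the system \eqref{eq:condition_AK_particular} holds, so I would pick structure constants $a_1,a_4,b_4,c_1,c_4,d_4\in\q[i]$ solving this system while keeping $dd^cF\neq 0$. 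This last inequality is essential: if $dd^cF=0$ held, we would land in the stable regime of \cite{FT}, and no counterexample could occur.

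Next I would fix the compact complex submanifold to blow up along. The candidate is $Y=p(Y_0)$ with $Y_0=\{z_2=z_4=0\}$, a $3$-dimensional nilmanifold whose induced structure equations are \eqref{eq:struct_eq_Y}. I would then form the blow-up $\pi\colon\tilde M_Y\to M$ with exceptional divisor $E$, noting that $\dim_\C E=4$ since $Y$ has codimension $2$ and the fibers of $\pi\restrict{E}$ are $1$-dimensional. The core of the argument is by contradiction: assuming $\tilde M_Y$ carries a Hermitian metric $\tilde F$ with $dd^c\tilde F^3=0$ and $dd^c\tilde F^2=0$, the restriction $\tilde F\restrict{E}$ is astheno-K\"ahler on $E$, i.e. $dd^c(\tilde F\restrict{E})^2=0$, so $E$ becomes a $2$-pluriclosed manifold.

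The key structural input is Alessandrini's Proposition \ref{prop:alessandrini}. Applying it to the holomorphic submersion $\pi\restrict{E}\colon E\to Y$, which has fiber dimension $a=1$, with the choice $p=2$, yields that $Y$ is $(2-1)=1$-pluriclosed, that is, $Y$ admits an SKT metric. The final and decisive step is to rule this out. Using the structure equations of $Y$ together with Lemma \ref{pL-sufficiente}, I would produce a form whose $dd^c$-image is a sign-definite multiple of a single term $\psi\wedge\overline{\psi}$ with $\psi$ simple, namely $dd^c(-\alpha^{3\overline{3}})=8\,\alpha^{12\overline{12}}$ for the chosen constants. By Lemma \ref{pL-sufficiente} (with $p=1$) this obstructs every SKT structure on $Y$, giving the required contradiction and hence the theorem.

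I expect the main obstacle to be the simultaneous bookkeeping of the constants: they must be chosen so that $M$ genuinely admits $F$ with both higher conditions (solving \eqref{eq:condition_AK_particular}) while $dd^cF\neq 0$, and so that the induced submanifold $Y$ fails to be SKT through the explicit obstruction of Lemma \ref{pL-sufficiente}. Checking that a single choice such as $a_1=-1-3i$, $a_4=1$, $b_4=1$, $c_1=-4$, $c_4=2$, $d_4=2$ meets all of these demands at once is where the computation concentrates; the descent from the blow-up to $Y$ via Proposition \ref{prop:alessandrini} is then essentially formal.
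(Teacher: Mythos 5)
Your proposal is correct and follows essentially the same route as the paper: the same $5$-dimensional nilmanifold from part II) of Theorem \ref{thm:5dim_astheno} with the same constants, the same submanifold $Y=p(\{z_2=z_4=0\})$, the restriction of the hypothetical metric to the exceptional divisor, descent to $Y$ via Proposition \ref{prop:alessandrini}, and the contradiction with Lemma \ref{pL-sufficiente} applied to $dd^c(-\alpha^{3\overline{3}})$. No substantive differences to report.
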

\section{Aeppli-Bott-Chern-Massey products and geometrically-Bott-Chern-formal metrics}
In this section, we review the definitions and basic facts of the notions of Aeppli-Bott-Chern-Massey products on a compact complex manifold and of  geometrically-Bott-Chern-formal Hermitian metrics.

From rational homotopy theory, a differentiable manifold $M$ is said to be {\em Sullivan formal} if the algebra of differential forms endowd with the exterior differential $d$, i.e., the pair $(\mathcal{A}_\C^{\bullet},d)$, is equivalent to a differential graded algebra $(B,d_{B})$ with zero differential, i.e., $d_{B}\equiv 0$.

In the $50$'s, Massey introdeced certain cosets of the de Rham cohomology, the {\em Massey triple products}, which, if non trivial, yield an obstruction to Sullivan's formality.

More recently, in \cite{AT15} a notion of Massey triple products has been introducted as an adaptation of classical Massey products for the Bott-Chern cohomology of complex manifolds.

Let $(M,J)$ be a compact complex manifold, and choose
\[
a=[\alpha]\in H_{BC}^{p,q}(M),\quad b=[\beta]\in H_{BC}^{r,s}(M), \quad c=[\gamma]\in H_{BC}^{u,v}(M)
\]
such that $$a\cup b=0\in H_{BC}^{p+r,q+s}(M), \quad b\cup c=0\in H_{BC}^{q+u,s+v}(M),$$
i.e., there exists $f_{\alpha\beta}\in\mathcal{A}^{p+r-1,q+s-1}M$ and $f_{\beta\gamma}\in\mathcal{A}^{r+u-1,s+v-1}M$ such that
\[
(-1)^{p+q}\alpha\wedge\beta=\del\delbar f_{\alpha\beta}, \quad (-1)^{r+s}\beta\wedge\gamma=\del\delbar f_{\beta\gamma}.
\] 
Then, the {\em triple Aeppli-Bott-Chern-Massey product} of $a,b,c$ is given by
\[
\langle a,b,c\rangle_{ABC}:=[(-1)^{p+q}\alpha\wedge f_{\beta\gamma}-(-1)^{r+s}f_{\alpha\beta}\wedge\gamma]_{A}\in\frac{H_A^{p+r+u-1,q+s+v-1}}{H_{A}^{r+u-1,s+v-1}(M)\cup a+H_A^{p+r-1,q+s-1}(M)\cup c}.
\]
We note that such a construction does not depend on the choice of representatives $\alpha,\beta,\gamma$ or the choice of the primitives $f_{\alpha\beta}, f_{\beta\gamma}$.

Furthermore, a notion of geometric formality analogous to the one in the sense of Kotschick (see \cite{Kot}) has been defined in \cite{AT15}.

Let $g$ be a Hermitian metric on a compact complex manifold $(M,J)$ and let $\mathcal{H}_{\Delta_{BC}}^{\bullet,\bullet}(M)$ be the space of Bott-Chern harmonic forms on $(M,J)$ with respect to $g$. The Hermitian metric $g$ is said to be {\em geometrically-Bott-Chern-formal} if $\mathcal{H}_{\Delta_{BC}}^{\bullet,\bullet}(M)$ has a structure of algebra induced by the $\wedge$ product, i.e., if for every two Bott-Chern harmonic forms $\alpha\in\mathcal{H}_{\Delta_{BC}}^{p,q}(M)$, $\beta\in\mathcal{H}_{\Delta_{BC}}^{r,s}(M)$, we have that $$\alpha\wedge\beta\in\mathcal{H}_{\Delta_{BC}}^{p+r,q+s}(M),$$
in particular, by the characterization \eqref{eq:BC_harm_form}, $$\del(\alpha\wedge\beta)=0, \quad \delbar(\alpha\wedge\beta)=0, \quad \del\delbar\ast_g(\alpha\wedge\beta)=0.$$
It is clear that, since first two conditions are always satisfied by Leibnitz rule, since $\alpha$ and $\beta$ are $d$-closed, the only condition that needs to be checked (and the only one involving the metric $g$) is $\del\delbar\ast_g(\alpha\wedge\beta)=0$.

It turns out that Aeppli-Bott-Chern-Massey products are an obstruction to the existence of geometrically-Bott-Chern-formal metrics, as proved in \cite{AT15}.
\begin{theorem*}$($\cite[Theorem 2.4]{AT15}$)$.
Triple Aeppli-Bott-Chern-Massey products vanish on compact complex geometrically-Bott-Chern-formal manifolds.
\end{theorem*}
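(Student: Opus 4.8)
The plan is to exploit the uniqueness of Bott-Chern harmonic representatives together with the defining property of a geometrically-Bott-Chern-formal metric, reducing everything to showing that the relevant wedge products vanish \emph{as forms}, not merely in cohomology. First I would fix a geometrically-Bott-Chern-formal metric $g$ on $(M,J)$ and represent the three classes $a=[\alpha]$, $b=[\beta]$, $c=[\gamma]$ by their unique Bott-Chern harmonic representatives $\alpha\in\mathcal{H}^{p,q}_{\Delta_{BC}}(M)$, $\beta\in\mathcal{H}^{r,s}_{\Delta_{BC}}(M)$, $\gamma\in\mathcal{H}^{u,v}_{\Delta_{BC}}(M)$. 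By the characterization \eqref{eq:BC_harm_form} these are in particular $d$-closed, hence legitimate inputs for the Massey construction, and by the very definition of geometric formality the wedge products $\alpha\wedge\beta$ and $\beta\wedge\gamma$ are again Bott-Chern harmonic.

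The key step is then to promote the cohomological vanishing $a\cup b=0$ and $b\cup c=0$ to the vanishing of actual forms. Since $\alpha\wedge\beta$ is a Bott-Chern harmonic form representing the class $a\cup b=0\in H^{p+r,q+s}_{BC}(M)$, and since under the isomorphism $H^{\bullet,\bullet}_{BC}(M)\simeq\mathcal{H}^{\bullet,\bullet}_{\Delta_{BC}}(M)$ the harmonic representative of a Bott-Chern class is unique with the zero class corresponding to the zero form, one concludes
\[
\alpha\wedge\beta=0 .
\]
Running the identical argument on $\beta\wedge\gamma$, which is harmonic and represents $b\cup c=0$, yields $\beta\wedge\gamma=0$ as well. (Equivalently, one may invoke the $L^2$-orthogonal Bott-Chern Hodge decomposition, in which $\mathcal{H}^{\bullet,\bullet}_{\Delta_{BC}}(M)\cap\im(\del\delbar)=\{0\}$, so that a harmonic $\del\delbar$-exact form must vanish; but the uniqueness argument above is the most economical.)

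Finally I would feed this back into the definition of the triple product. Because $\alpha\wedge\beta=0=\beta\wedge\gamma$, one may take the primitives $f_{\alpha\beta}=0$ and $f_{\beta\gamma}=0$, since then both sides of $(-1)^{p+q}\alpha\wedge\beta=\del\delbar f_{\alpha\beta}$ and $(-1)^{r+s}\beta\wedge\gamma=\del\delbar f_{\beta\gamma}$ are zero. With this choice the representing Aeppli form
\[
(-1)^{p+q}\alpha\wedge f_{\beta\gamma}-(-1)^{r+s}f_{\alpha\beta}\wedge\gamma
\]
is identically zero, so its Aeppli class vanishes; as the construction is independent of the choice of representatives and primitives, $\langle a,b,c\rangle_{ABC}=0$. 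The only genuine subtlety — the point deserving care rather than computation — is the passage from ``exact in Bott-Chern cohomology'' to ``zero as a form,'' which is exactly where the formality of $g$ and the uniqueness of harmonic representatives are used in tandem; once that is secured the conclusion is immediate.
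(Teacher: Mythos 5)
Your argument is correct and is essentially the proof of the cited source \cite[Theorem 2.4]{AT15} (the present paper only quotes the statement without reproving it): by geometric Bott-Chern formality the wedge of harmonic representatives is again Bott-Chern harmonic, a Bott-Chern harmonic form that is also $\del\delbar$-exact vanishes by $L^2$-orthogonality of the Bott-Chern Hodge decomposition, so one may take $f_{\alpha\beta}=f_{\beta\gamma}=0$ and the triple product is represented by the zero form. No gaps.
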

\section{Geometric Bott-Chern formality and Strong K\"ahler with Torsion metrics}
In this section we investigate the relation between the notions of SKT metrics and geometrically-Bott-Chern-formal metrics in the setting of nilmanifolds endowed with a left-invariant complex structure $J$ and a Hermitian metric $g$.

In complex dimension $3$, the existence of SKT metrics is fully characterized by Fino, Parton, and Salamon, in terms of the complex structure equation of the manifold, as we recall in the following.
\begin{theorem}\label{thm:FPS} $($\cite[Theorem 1.2]{FPS}$)$.
Let $M = \Gamma\backslash G$ be a $6$-dimensional nilmanifold with an invariant complex structure
$J$. Then the SKT condition is satisfied by either all invariant Hermitian metrics $g$ or by
none. Indeed, it is satisfied if and only if $J$ has a basis $(\alpha^i)$ of $(1, 0)$-forms such that
\begin{equation}\label{eq:struct_FPS}
\begin{cases}
d\alpha^1 = 0\\
d\alpha^2 = 0\\
d\alpha^3 = A\alpha^{\overline{1}2} + B\alpha^{\overline{2}2} + C\alpha^{1\overline{1}} + D\alpha^{1\overline{2}} + E\alpha^{12}
\end{cases}
\end{equation}
where $A,B,C,D,E$ are complex numbers such that
\begin{equation}\label{eq:FPS}
|A|^2+|D|^2+|E|^2+2\Real(\overline{B}C)=0.
\end{equation}
\end{theorem}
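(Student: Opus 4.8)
The plan is to work entirely on the Lie algebra $\mathfrak g$, since both $J$ and the metrics in the statement are left-invariant, and to reduce everything to a single invariant identity for $\del\delbar F$. I would first prove sufficiency and the all-or-none dichotomy together, and then treat necessity, which I expect to be the genuine difficulty.

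\emph{Sufficiency and the dichotomy.} Assume $J$ admits a $(1,0)$-coframe $(\alpha^i)$ satisfying \eqref{eq:struct_FPS}, and write a general invariant Hermitian metric as $F=\frac{i}{2}\sum_{j,k}g_{j\bar k}\,\alpha^j\wedge\bar\alpha^k$, with $(g_{j\bar k})$ positive-definite Hermitian. Since $\alpha^1,\alpha^2$ (hence $\bar\alpha^1,\bar\alpha^2$) are closed, in $\delbar F$ every summand with $j,k\in\{1,2\}$ vanishes, so only the terms carrying $\delbar\alpha^3$ (weighted by $g_{3\bar k}$) and $\delbar\bar\alpha^3$ (weighted by $g_{j\bar 3}$) remain. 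Applying $\del$ and using $\del\delbar\alpha^3=0=\del\delbar\bar\alpha^3$ together with $\del\bar\alpha^k=0=\del\alpha^j$ for $j,k\in\{1,2\}$, all cross terms drop out and only the $g_{3\bar3}$-contribution survives; a direct computation then yields
\[
\del\delbar F \;=\; -\frac{i}{2}\,g_{3\bar 3}\bigl(\vert A\vert^2+\vert D\vert^2+\vert E\vert^2+2\Real(\bar B C)\bigr)\,\alpha^{12\overline{12}}.
\]
As $g_{3\bar3}>0$ for every invariant metric, the right-hand side vanishes if and only if \eqref{eq:FPS} holds, a condition independent of $g$. This proves at once that \eqref{eq:struct_FPS}--\eqref{eq:FPS} forces the SKT condition for \emph{all} invariant metrics, and that for such a $J$ the SKT property is an all-or-none phenomenon.

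\emph{Necessity.} It remains to show that if some invariant Hermitian metric on $(M,J)$ is SKT, then $J$ can be brought into the form \eqref{eq:struct_FPS} with \eqref{eq:FPS} satisfied. Here I would begin from Salamon's structure theorem \cite{Sal}, which provides a $(1,0)$-coframe $(\omega^i)$ with $d\omega^1=0$, $d\omega^2\in\mathcal I(\omega^1)$ and $d\omega^3\in\mathcal I(\omega^1,\omega^2)$, so that $d\omega^2,d\omega^3$ are governed by a finite list of structure constants. Repeating the computation of $\del\delbar F$ with these general constants produces the SKT system, which is quadratic in the constants; combining it with the residual freedom of unipotent changes of coframe preserving the filtration $\langle\omega^1\rangle\subset\langle\omega^1,\omega^2\rangle$, one reduces to \eqref{eq:struct_FPS}. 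The possibilities to be excluded are $d\omega^2\neq0$ and the occurrence in $d\omega^3$ of any term beyond the five listed; to rule these out I expect to use either the explicit classification of complex structures on $6$-dimensional nilpotent Lie algebras due to Salamon and Ugarte, or a positivity/Stokes argument in the spirit of Lemma \ref{pL-sufficiente}, showing that any surviving offending term contributes a definite-sign component to $\del\delbar F$ and hence obstructs the SKT condition. The dichotomy then follows formally: either $J$ can be put in the normal form (and sufficiency applies), or it cannot (and no invariant metric is SKT).

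The main obstacle is exactly this necessity step. Sufficiency and the dichotomy flow from one clean invariant computation, but establishing that SKT forces $d\omega^2=0$ and kills all extra components of $d\omega^3$ requires a careful case analysis, since a priori $J$ need not be a nilpotent complex structure and the change-of-coframe group acts nontrivially on the quadratic SKT system. I would organize this as a finite case check over Salamon's list, in each case solving the SKT equations explicitly and exhibiting the coframe realizing \eqref{eq:struct_FPS}.
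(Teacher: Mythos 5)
This statement is quoted in the paper from \cite[Theorem 1.2]{FPS} and is not proved there, so there is no internal proof to compare against; what can be assessed is whether your argument would stand on its own. The first half does: your computation that for a coframe satisfying \eqref{eq:struct_FPS} one has $\del\delbar\alpha^3=0$, hence all off-diagonal and non-$(3,\bar3)$ contributions to $\del\delbar F$ drop out and $\del\delbar F$ reduces to a single multiple of $g_{3\bar3}\,\alpha^{12\overline{12}}$ with coefficient $\vert A\vert^2+\vert D\vert^2+\vert E\vert^2+2\Real(\overline{B}C)$, is correct and establishes sufficiency together with the metric-independence of the SKT condition \emph{for complex structures already in the normal form}.

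The genuine gap is the necessity direction, which you describe as a programme rather than carry out, and on which the unconditional ``all or none'' dichotomy also depends (your reduction ``either $J$ can be put in normal form, or no invariant metric is SKT'' is precisely the unproved implication). Two concrete difficulties are glossed over. First, for a general invariant $J$ in a Salamon coframe with $d\omega^2\in\mathcal I(\omega^1)$ possibly nonzero and $d\omega^3$ carrying all admissible terms, $\del\delbar F$ is no longer a single multiple of one $(2,2)$-form: it has several components whose coefficients mix different entries $g_{j\bar k}$ of the metric, so one cannot argue sign-definiteness term by term, and Lemma \ref{pL-sufficiente} does not apply directly because the offending terms need not decompose as $\sum c_k\psi^k\wedge\overline{\psi^k}$ with constants of one sign. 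Second, the case analysis over the classification of complex structures on $6$-dimensional nilpotent Lie algebras (including showing that SKT forces $d\omega^2=0$ and excludes the remaining terms in $d\omega^3$, and tracking the action of the unipotent coframe changes on the quadratic SKT system) is the actual content of the Fino--Parton--Salamon proof and is nontrivial; asserting that it ``reduces to \eqref{eq:struct_FPS}'' does not discharge it. As written, your text proves one implication of the equivalence and a conditional form of the dichotomy, but not the theorem.
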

We will refer to $6$-dimensional nilmanifolds satisfying \eqref{eq:struct_FPS} and \eqref{eq:FPS} as \emph{Fino-Parton-Salamon-nilmanifolds}, shortly \emph{FPS-nilmanifolds}.
 
By this classification result, we are able to prove the following theorem.

\begin{theorem}\label{thm:SKT_geom_FPS}
Let $(M,J)$ be an FPS-nilmanifold. Then, any left-invariant (SKT) metric is geometrically-Bott-Chern-formal.
\end{theorem}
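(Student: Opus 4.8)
The plan is to show that for an FPS-nilmanifold, every left-invariant SKT metric has the property that the wedge product of any two Bott-Chern harmonic forms is again Bott-Chern harmonic. By the characterization \eqref{eq:BC_harm_form}, since harmonic forms here are $d$-closed, the only nontrivial condition to verify is $\del\delbar\ast_g(\alpha\wedge\beta)=0$. My strategy rests on working entirely at the level of the Lie algebra: by symmetrization (Nomizu-type results for the Bott-Chern cohomology of nilmanifolds with invariant complex structure), I may assume all harmonic forms are left-invariant, so the whole computation reduces to finite-dimensional linear algebra on $\bigwedge^{\bullet,\bullet}\mathfrak{g}^*$ governed by the structure equations \eqref{eq:struct_FPS}.

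\emph{First} I would fix the invariant SKT metric $g$ and compute, degree by degree, an explicit basis of the invariant Bott-Chern harmonic forms $\mathcal{H}^{p,q}_{\Delta_{BC}}$. The structure equations tell us that $\alpha^1,\alpha^2$ and their conjugates are $d$-closed, while only $\alpha^3,\alpha^{\overline 3}$ carry differential, with $d\alpha^3 = A\alpha^{\overline 1 2}+B\alpha^{\overline 2 2}+C\alpha^{1\overline 1}+D\alpha^{1\overline 2}+E\alpha^{12}$. This makes $\del$ and $\delbar$ very sparse: a monomial is closed unless it contains $\alpha^3$ or $\alpha^{\overline 3}$ in a way that produces a surviving $(2,2)$- or $(1,2)$/$(2,1)$-type term. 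I would tabulate $\mathcal{H}^{p,q}_{\Delta_{BC}}$ in each bidegree $(p,q)$ with $0\le p,q\le 3$, using the three conditions $\del\alpha=\delbar\alpha=\del^*\delbar^*\alpha=0$; the diagonal metric makes the Hodge star on monomials completely explicit, so $\del^*\delbar^*$ acts transparently.

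\emph{Next}, with explicit harmonic bases in hand, I would verify closure under wedge: take representative harmonic generators $\alpha\in\mathcal{H}^{p,q}_{\Delta_{BC}}$, $\beta\in\mathcal{H}^{r,s}_{\Delta_{BC}}$, form $\alpha\wedge\beta$, and check $\del\delbar\ast_g(\alpha\wedge\beta)=0$ directly. The key algebraic input is the FPS relation \eqref{eq:FPS}, namely $|A|^2+|D|^2+|E|^2+2\Real(\overline B C)=0$; I expect this to be precisely the identity that forces the potentially-obstructing terms to cancel, exactly as it is the condition that makes an invariant metric SKT in the first place. Since wedge products land in bidegrees where the harmonic space is often the full space of $d$-closed invariant forms, many cases will be immediate, and only the middle bidegrees (those involving $\ast_g$ of products containing $\alpha^3$ or $\alpha^{\overline 3}$) will require the relation.

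\textbf{The main obstacle} I anticipate is bookkeeping rather than conceptual: there are many bidegrees and many pairs of generators, and the Hodge-star computations, while mechanical for a diagonal metric, are error-prone. The genuinely delicate point is to confirm that the symmetrization step is legitimate here—that the invariant Bott-Chern harmonic forms for the invariant metric compute the full Bott-Chern cohomology and, more importantly, that geometric formality can be tested on invariant forms alone. I would justify this by noting that $\ast_g$ preserves invariant forms for an invariant metric, so $\Delta_{BC}$ preserves the invariant subcomplex, and hence the product of invariant harmonic forms being harmonic is both necessary and (by the invariance of the whole harmonic space) sufficient for the metric to be geometrically-Bott-Chern-formal. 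Granting this reduction, the proof collapses to the finite check outlined above, with \eqref{eq:FPS} doing the essential work.
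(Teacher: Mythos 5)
Your overall strategy---reduce to left-invariant forms via the Nomizu/Rollenske/Angella isomorphisms, note that only the condition $\del\delbar\ast_g(\alpha\wedge\beta)=0$ needs checking, and let the FPS relation \eqref{eq:FPS} do the cancellation---is sound and would reach the conclusion, but the paper takes a much shorter route that you should be aware of. Instead of tabulating Bott-Chern harmonic bases in every bidegree and checking all products, the paper proves a single structural lemma (Lemma \ref{lemma:tecnico}): on an FPS-nilmanifold $\del\delbar$ vanishes \emph{identically} on $\bigwedge^{p,q}\mathfrak{g}$. The point is that by the structure equations \eqref{eq:struct_FPS} the only invariant monomials whose $\del\delbar$ could survive are those containing the factor $\alpha^{3\overline{3}}$, and for these $\del\delbar\sigma=\del\delbar(\alpha^{3\overline{3}})\wedge\hat\sigma$; since the SKT condition forces $\del\delbar F=\tfrac{i}{2}F_{3\overline{3}}\,\del\delbar\alpha^{3\overline{3}}=0$, one gets $\del\delbar\alpha^{3\overline{3}}=0$ and hence $\del\delbar\equiv 0$ on all invariant forms. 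Then $\del\delbar\ast_g(\alpha\wedge\beta)=0$ holds for \emph{any} invariant forms $\alpha,\beta$, harmonic or not, and no case analysis or computation of harmonic representatives is needed. Your ``I expect the FPS relation to force the cancellations'' is exactly right, but you are proposing to rediscover this one identity separately in every bidegree.

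There is one point in your plan that is a genuine gap rather than mere inefficiency: you propose to work with ``the diagonal metric'' so that the Hodge star is explicit on monomials, but the theorem asserts the conclusion for \emph{every} left-invariant SKT metric, and a general invariant Hermitian metric has off-diagonal components $F_{k\overline{h}}$ in the FPS coframe. Your explicit harmonic-basis computation and the action of $\del^*\delbar^*$ would change with the metric, so as written your argument only covers the diagonal case. The paper's lemma sidesteps this entirely: since $\ast_g$ of an invariant form is invariant for any invariant $g$, and $\del\delbar$ kills all invariant forms, the verification is metric-independent. If you keep your route, you must either rerun the computation for an arbitrary invariant metric or first justify passing to a unitary coframe while preserving the form of the structure equations.
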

Before proving Theorem \ref{thm:SKT_geom_FPS}, we will need the following lemma for the $\del\delbar$ operator on this class of manifolds.
\begin{lemma}\label{lemma:tecnico}
Let $(M,J)$ be a FPS-nilmanifold. Then, \[
\del\delbar\,\restrict{\bigwedge^{p,q}\mathfrak{g}}\equiv 0.
\]
\begin{proof}[Proof. (of Lemma 6.3)]
We begin by observing that it suffices to prove that $\del\delbar\alpha^{3\overline{3}}=0.$ In fact, let us consider the left invariant $(p,q)$-form on $M$ $$\sigma:=\alpha^{i_1}\wedge\dots\wedge\alpha^{i_p}\wedge\alpha^{\overline{j}_1}\wedge\dots\alpha^{\overline{j}_q}.$$
We note that if $\sigma$ does not contain $\alpha^{3\overline{3}}$, them $\del\delbar\sigma=0$. In fact, let us consider the two cases:\\
$(1)$ $i_k\neq 3,\overline{j}_l\neq \overline{3}$ for every $k\in\{1,\dots,p\}$, $l\in\{1,\dots,q\}$. \\
$(2)$ $i_k=3$ for some $k\in\{1,\dots,p\}$ and $j_l\neq \overline{3}$ for every $l\in\{1,\dots,q\}$, or $i_k\neq 3$ for every $k\in\{1,\dots,p\}$ and $j_l=$ for some $l\in\{1,\dots,q\}$. 

In case $(1)$, by structure equations \eqref{eq:struct_FPS} we immediately have that  $\delbar\alpha^{i_{k}}=\delbar\alpha^{\overline{j}_l}=0$. Hence, by Leibnitz rule, $\del\delbar\sigma=\del(\delbar\sigma)=0$. 

For case $(2)$, let $i_k=3$ for $k\in\{1,\dots,p\}$ and $\overline{j}_l\neq\overline{3}$. Then, up to a sign change, by Leibnitz rule we have that $\delbar\sigma=\delbar\alpha^{3}\wedge\hat{\sigma}$, where $\hat{\sigma}$ is $\sigma$ from which we remove $\alpha^{3}$. Since $\delbar{\alpha^{3}}=A\eta^{\overline{1}2}+B\eta^{2\overline{2}}+C\eta^{1\overline{1}}+D\eta^{1\overline{2}}$, we can write that $$\delbar\sigma=A\eta^{\overline{1}2}\wedge\hat{\sigma}+B\eta^{2\overline{2}}\wedge\hat{\sigma}+C\eta^{1\overline{1}}\wedge\hat{\sigma}+D\eta^{1\overline{2}}\wedge\hat{\sigma}.$$ Since $\delbar\sigma$ does not contain $\alpha^3$ or $\alpha^{\overline{3}}$, once again by \eqref{eq:struct_FPS} and Leibnitz rule, we obtain $\del\delbar\sigma=\del(\delbar\sigma)=0.$ Analogous computations can be carried out when $i_k\neq 3$ for every $k\in\{1,\dots,p\}$ and $j_l=\overline{3}$ for some $l\in\{1,\dots,q\}$.

Let us then consider $\del\delbar\alpha^{3\overline{3}}$. If $g$ is any left invariant metric on $(M,J)$ with fundamental associated form
\[
F=\frac{i}{2}\sum_{k=1}^3 F_{k\overline{k}}\alpha^{k\overline{k}}+\frac{1}{2}\sum_{k<h}\left( F_{k\overline{h}}\alpha^{k\overline{h}}-\overline{F}_{k\overline{h}}\alpha_{h\overline{k}}\right)
\]
then, by the above argument $\del\delbar F=\frac{i}{2}F_{3\overline{3}}\del\delbar\alpha^{3\overline{3}}$. By Theorem, any left-invariant on $(M,J)$ is invariant therefore, $g$ is SKT, i.e., $\del\delbar F=\del\delbar \alpha^{3\overline{3}}=0$.

Therefore, up to swapping the forms and changing the sign accordingly, for a left-invariant form $\sigma=\alpha^{3\overline{3}}\wedge\hat{\sigma}$ with $\hat{\sigma}$ not containing $\alpha^3$ nor $\alpha^{\overline{3}}$, for the above arguments we have that $\del\delbar\sigma=\del\delbar(\alpha^{3\overline{3}})\wedge\hat{\sigma}=0$. Then, by linearity of the $\del\delbar$ operator, we can conclude.
\end{proof}
\end{lemma}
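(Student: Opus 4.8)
The plan is to reduce the vanishing of $\del\delbar$ on the whole left-invariant bigraded algebra $\bigwedge^{\bullet,\bullet}\mathfrak{g}$ to the single identity $\del\delbar\,\alpha^{3\overline{3}}=0$, and then to obtain the latter from the SKT property that the FPS hypothesis guarantees. The first observation is that the structure equations \eqref{eq:struct_FPS} make $\alpha^1,\alpha^2,\overline{\alpha^1},\overline{\alpha^2}$ all $d$-closed, so $\alpha^3$ and $\overline{\alpha^3}$ are the only generators on which $d$ acts nontrivially. Splitting $d\alpha^3$ into bidegrees gives $\del\alpha^3=E\,\alpha^{12}$ and $\delbar\alpha^3$ equal to the $(1,1)$-part of $d\alpha^3$, and conjugating gives $\delbar\overline{\alpha^3}=\overline{E}\,\alpha^{\overline{12}}$ and $\del\overline{\alpha^3}=\overline{\delbar\alpha^3}$. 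The key elementary remark, which I would isolate at the outset, is that each of $\del\alpha^3,\delbar\alpha^3,\del\overline{\alpha^3},\delbar\overline{\alpha^3}$ is a polynomial in the closed generators $\alpha^1,\alpha^2,\overline{\alpha^1},\overline{\alpha^2}$ alone; hence each is itself $d$-closed, and in particular $\del\delbar\alpha^3=\del\delbar\overline{\alpha^3}=0$.

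Next I would run a case analysis on a monomial $\sigma$ according to how many of $\alpha^3,\overline{\alpha^3}$ it contains (by antisymmetry, zero, one, or two). If neither occurs, $\sigma$ is a product of closed forms, hence $d$-closed, so $\del\delbar\sigma=0$. If exactly one occurs, write $\sigma=\pm\,\alpha^3\wedge\tau$ (or the analogous expression with $\overline{\alpha^3}$), where $\tau$ is a monomial in the closed generators; then $\delbar\sigma=\pm\,\delbar\alpha^3\wedge\tau$ is again a product of closed forms, hence closed, so $\del\delbar\sigma=0$. If both occur, reorder to $\sigma=\pm\,\alpha^{3\overline{3}}\wedge\tau$ with $\tau$ closed; applying the Leibniz rule twice and using $\del\tau=\delbar\tau=0$ collapses the computation to $\del\delbar\sigma=\pm\,(\del\delbar\alpha^{3\overline{3}})\wedge\tau$. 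By linearity, the lemma follows once $\del\delbar\alpha^{3\overline{3}}=0$ is proved.

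For the last identity I would avoid the explicit $(2,2)$-computation and use the metric instead. The same bookkeeping applied to an arbitrary left-invariant fundamental form $F=\frac{i}{2}\sum_k F_{k\overline{k}}\alpha^{k\overline{k}}+\frac{1}{2}\sum_{k<h}\big(F_{k\overline{h}}\alpha^{k\overline{h}}-\overline{F}_{k\overline{h}}\alpha^{h\overline{k}}\big)$ shows that every summand except $\frac{i}{2}F_{3\overline{3}}\alpha^{3\overline{3}}$ contains at most one of $\alpha^3,\overline{\alpha^3}$ and is therefore annihilated by $\del\delbar$ by the case analysis above; hence $\del\delbar F=\frac{i}{2}F_{3\overline{3}}\,\del\delbar\alpha^{3\overline{3}}$. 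Since $(M,J)$ is an FPS-nilmanifold, Theorem \ref{thm:FPS} says every invariant Hermitian metric is SKT, i.e.\ $\del\delbar F=0$; as $F_{3\overline{3}}>0$ for a genuine metric, we conclude $\del\delbar\alpha^{3\overline{3}}=0$.

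The main obstacle is conceptual rather than computational: one must notice that the ``second order'' Leibniz rule for $\del\delbar$ produces cross terms, and that all of these vanish here because $\del$ and $\delbar$ send $\alpha^3,\overline{\alpha^3}$ into the closed subalgebra generated by $\alpha^1,\alpha^2,\overline{\alpha^1},\overline{\alpha^2}$; the only surviving ones are exactly the content of $\del\delbar\alpha^{3\overline{3}}$. Once this closure is in place, the case analysis is routine and the sole metric-dependent input is supplied for free by the SKT condition. As a check (and as an alternative to invoking SKT), substituting the formulas above into $\del\delbar\alpha^{3\overline{3}}=\delbar\alpha^3\wedge\del\overline{\alpha^3}-\del\alpha^3\wedge\delbar\overline{\alpha^3}$ yields $\del\delbar\alpha^{3\overline{3}}=-\big(|A|^2+|D|^2+|E|^2+2\Real(\overline{B}C)\big)\,\alpha^{12\overline{12}}$, which is zero precisely by the FPS condition \eqref{eq:FPS}.
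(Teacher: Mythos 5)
Your proposal is correct and follows essentially the same route as the paper: the same reduction of $\del\delbar$ on monomials to the single identity $\del\delbar\alpha^{3\overline{3}}=0$ via the case analysis on how many of $\alpha^{3},\overline{\alpha^{3}}$ appear, and the same derivation of that identity from $\del\delbar F=\tfrac{i}{2}F_{3\overline{3}}\,\del\delbar\alpha^{3\overline{3}}$ together with Theorem~\ref{thm:FPS}. Your closing direct computation of $\del\delbar\alpha^{3\overline{3}}$ in terms of $|A|^2+|D|^2+|E|^2+2\Real(\overline{B}C)$ is a nice self-contained alternative to invoking the SKT condition, but it does not change the substance of the argument.
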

\begin{proof}[Proof. (of Theorem 7.2)]
First of all, we observe that the Bott-Chern cohomology of $(M,J)$ can be computed in terms of the complex of left-invariant forms. For the sake of completeness, we recall the argument. In view of Nomizu's theorem,
\[
H_{dR}^p(\mathfrak{g})\hookrightarrow H_{dR}^p(M)
\]
is an isomorphism. By  \cite[Section 4.2]{Rollenske2} (see also \cite{CF,CFP} for other results on the Dolbeault cohomology of complex nilmanifolds), we have that
\[
H_{\delbar}^{p,q}(\mathfrak{g},J)\hookrightarrow  H_{\delbar}^{p,q}(M)
\]
gives rise to an isomorphism, that is, the Dolbeault cohomology of $(M,J)$ can be computed in terms of $\bigwedge^{p,q}\mathfrak{g}$. Finally, applying \cite[Theorem 3.7]{A13}, we obtain that
\[
H_{BC}^{p,q}(\mathfrak{g},J)\hookrightarrow H_{BC}^{p,q}(M).
\]

Now, let $g$ be a left-invariant metric on $(M,J)$ with fundamental associated form $F$. We will show that $g$ is geometrically-Bott-Chern-formal.
Let us then fix two Bott-Chern harmonic forms $\beta\in\mathcal{H}_{BC}^{p,q}(M,g)$, $\gamma\in\mathcal{H}_{BC}^{r,s}(M,g)$. Then, the product $\beta\wedge\gamma$ is Bott-Chern harmonic with respect to $g$ if, and only if, $$d(\beta\wedge\gamma)=0,\quad \del\delbar\ast_{g}(\beta\wedge\gamma)=0.$$
By Leibnitz rule, $d(\beta\wedge\gamma)=0$ since both $\beta$ and $\gamma$ are Bott-Chern harmonic. Moreover, by Lemma \ref{lemma:tecnico}, $\del\delbar(\ast_g\beta\wedge\gamma)=0$, i.e., $\beta\wedge\gamma\in\mathcal{H}_{BC}^{p+r,q+s}(M,g)$. Hence, $g$ is a geometrically-Bott-Chern -formal metric on $(M,J)$.
\end{proof}
A similar result also for a class of manifolds which generalizes the FPS manifolds in higher dimensions.
\begin{theorem}\label{thm:n-dim-SKT-BC-geom}
Let $(M,J)$  be any $n$-dimensional nilmanifold on which the Bott-Chern cohomology can be computed by the complex of left-invariant complex differential forms and which admits a coframe $\{\eta^1,\dots, \eta^n\}$ of left invariant ${1,0}$-forms on $(M,J)$ with structure equations given by
\begin{equation*}
\begin{cases}
d\eta^i=0,\quad i\in\{1,\dots,n-1\},\\
d\eta^n\in \text{Span} \langle\eta^{i\overline{j}}\rangle_{i,j=1,\dots, n-1}.
\end{cases}
\end{equation*}
Then, any left-invariant SKT metric is geometrically-Bott-Chern-formal. 
\end{theorem}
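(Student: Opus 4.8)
The plan is to reduce the statement to an analogue of Lemma~\ref{lemma:tecnico} for the present class of nilmanifolds, and then to repeat, essentially verbatim, the argument used in the proof of Theorem~\ref{thm:SKT_geom_FPS}. Concretely, once a left-invariant SKT metric $g$ is fixed, I would first prove that the operator $\del\delbar$ vanishes identically on the space $\bigwedge^{p,q}\mathfrak{g}$ of left-invariant forms. The structure equations give $\del\eta^i=\delbar\eta^i=0$ for $i\le n-1$ and, since $d\eta^n$ is of pure type $(1,1)$, also $\del\eta^n=0$ with $\delbar\eta^n=d\eta^n\in\Span\langle\eta^{k\bar l}\rangle_{k,l\le n-1}$; conjugating, $\delbar\eta^{\bar n}=0$ while $\del\eta^{\bar n}=d\eta^{\bar n}$ is again of type $(1,1)$ with indices $\le n-1$.

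Next I would run the same case analysis as in Lemma~\ref{lemma:tecnico} on a monomial $\sigma=\eta^{i_1}\wedge\cdots\wedge\eta^{i_p}\wedge\eta^{\bar{j}_1}\wedge\cdots\wedge\eta^{\bar{j}_q}$. If $\sigma$ contains neither $\eta^n$ nor $\eta^{\bar n}$, every factor is $d$-closed and $\del\delbar\sigma=0$. If $\sigma$ contains exactly one of $\eta^n,\eta^{\bar n}$, a single application of the Leibniz rule shows that $\delbar\sigma$ is a sum of monomials built only from $1$-forms of index $\le n-1$ and their conjugates, all of which are $\del$- and $\delbar$-closed, whence $\del\delbar\sigma=0$. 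The only remaining case is $\sigma=\pm\,\eta^{n\bar n}\wedge\hat\sigma$ with $\hat\sigma$ free of $\eta^n,\eta^{\bar n}$; here $\del\hat\sigma=\delbar\hat\sigma=0$, so the computation collapses to $\del\delbar\sigma=\pm\,\del\delbar(\eta^{n\bar n})\wedge\hat\sigma$.

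The crux of the whole argument is therefore the single identity $\del\delbar\eta^{n\bar n}=0$. Expanding, $\del\delbar\eta^{n\bar n}=\delbar\eta^n\wedge\del\eta^{\bar n}$ (the term $\del\delbar\eta^n$ drops out, since $\delbar\eta^n=d\eta^n$ is $d$-closed and its $(2,1)$-part $\del\delbar\eta^n$ thus vanishes), and this $(2,2)$-form need not vanish for a generic complex structure of the given type. This is exactly where the SKT hypothesis enters: writing the fundamental form of $g$ as $F=\tfrac{i}{2}\sum_k F_{k\bar k}\,\eta^{k\bar k}+\tfrac12\sum_{k<h}\bigl(F_{k\bar h}\,\eta^{k\bar h}-\overline{F}_{k\bar h}\,\eta^{h\bar k}\bigr)$, the preceding case analysis shows that $\del\delbar$ annihilates every summand except $\eta^{n\bar n}$, so $\del\delbar F=\tfrac{i}{2}F_{n\bar n}\,\del\delbar\eta^{n\bar n}$. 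Since $F_{n\bar n}>0$ and $g$ is SKT, we obtain $\del\delbar\eta^{n\bar n}=0$, i.e. $\delbar\eta^n\wedge\del\eta^{\bar n}=0$, which closes the remaining case and establishes $\del\delbar\equiv 0$ on $\bigwedge^{p,q}\mathfrak{g}$.

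Finally I would conclude as for Theorem~\ref{thm:SKT_geom_FPS}. Because the Bott-Chern cohomology is computed by invariant forms and $\Delta_{BC}$ is left-invariant, the invariant Bott-Chern harmonic forms inject into $\mathcal{H}^{\bullet,\bullet}_{\Delta_{BC}}(M)$ and, by a dimension count against $H^{\bullet,\bullet}_{BC}(M)$, exhaust it; thus every Bott-Chern harmonic form is left-invariant. Given $\beta\in\mathcal{H}^{p,q}_{\Delta_{BC}}(M,g)$ and $\gamma\in\mathcal{H}^{r,s}_{\Delta_{BC}}(M,g)$, the product $\beta\wedge\gamma$ is $d$-closed by the Leibniz rule, and $\ast_g(\beta\wedge\gamma)$ is again left-invariant, so $\del\delbar\ast_g(\beta\wedge\gamma)=0$ by the lemma. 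By the characterization \eqref{eq:BC_harm_form}, $\beta\wedge\gamma$ is Bott-Chern harmonic, and hence $g$ is geometrically-Bott-Chern-formal. The main obstacle is the type computation isolating $\del\delbar\eta^{n\bar n}$ as the sole obstruction and recognizing it as the SKT condition; the passage to invariant harmonic representatives is routine given the standing cohomological assumption.
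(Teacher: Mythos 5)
Your proposal is correct and follows essentially the same route as the paper, which simply defers to the argument of Theorem \ref{thm:SKT_geom_FPS}: reduce everything to the single identity $\del\delbar\eta^{n\overline{n}}=0$, extract it from the SKT condition via $\del\delbar F=\tfrac{i}{2}F_{n\overline{n}}\del\delbar\eta^{n\overline{n}}$, conclude that $\del\delbar$ vanishes on all left-invariant forms, and then verify Bott-Chern harmonicity of products using the invariance of harmonic representatives. Your write-up is in fact more explicit than the paper's (notably the identification $\del\delbar\eta^{n\overline{n}}=\delbar\eta^{n}\wedge\del\eta^{\overline{n}}$ and the dimension-count justification that all Bott-Chern harmonic forms are left-invariant), but there is no substantive difference in method.
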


\begin{proof}
In a similar fashion to proof of Theorem \ref{thm:SKT_geom_FPS}, it can be shown that, if there exists a left invariant SKT metric on $(M,J)$, then $\del\delbar\eta^{n\overline{n}}=0$, and in particular the $\del\delbar$ operator vanishes on any left-invariant form on $(M,J)$. Hence, if $g$ a SKT metric on $(M,J)$ and we take two Bott-Chern harmonic forms $\alpha\in\mathcal{H}_{BC}^{p,q}(M,g)$, $\beta\in \mathcal{H}_{BC}^{r,s}(M,g)$, then $\alpha\wedge\beta$ is closed and $\del\delbar(\ast_{g}\alpha\wedge\beta)=0$, therefore $\alpha\wedge\beta\in\mathcal{H}_{BC}^{p+r,q+s}(M,g)$, i.e., the product of two any Bott-Chern harmonic forms with respect to the SKT metric $g$ is Bott-Chern harmonic. This implies that every left-invariant SKT metric is geometrically-Bott-Chern-formal.
\end{proof}

In higher dimension and under more general conditions on the complex structure of the nilmanifold, however, similar results do not hold. Certain products of compact complex surfaces, e.g., are SKT but do not admit geometrically-Bott-Chern-formal metrics, as proved in the following theorem.

\begin{theorem}\label{thm:product_BC_form}
Let $(M,J)$ be the product of either two Kodaira surfaces, two Inoue surfaces, or a Kodaira surface and a Inoue surface. Then $(M,J)$ admits SKT metrics but does not admit geometrically-Bott-Chern-formal metrics.
\end{theorem}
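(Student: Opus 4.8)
The plan is to establish the two assertions separately, since the existence of SKT metrics and the non-existence of geometrically-Bott-Chern-formal metrics require rather different techniques. For the SKT part, I would exploit the fact that the SKT condition is stable under taking products: if $(M_1,J_1,g_1)$ and $(M_2,J_2,g_2)$ are Hermitian manifolds with fundamental forms $F_1,F_2$ satisfying $\del\delbar F_i=0$, then the product metric on $(M_1\times M_2, J_1\times J_2)$ has fundamental form $F=F_1+F_2$ (pulled back via the projections), and since $\del\delbar$ commutes with pullback and mixes no cross terms on a product, $\del\delbar F = \del\delbar F_1 + \del\delbar F_2 = 0$. Thus it suffices to recall that each factor---a primary Kodaira surface and an Inoue surface of type $\mathcal{S}_M$---carries an SKT metric. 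For the Kodaira surface this is classical (every compact complex surface admits an SKT, in fact Gauduchon, metric, and in real dimension $4$ the SKT condition reads $\del\delbar F=0$, which is automatic since $F^2$ is top-degree up to constants); for the Inoue surface one invokes the known explicit left-invariant SKT metric on the associated solvmanifold. I would cite these facts and assemble the product metric.

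The substance of the theorem is the non-formality assertion, and here the plan is to invoke the obstruction recalled in the previous section: by \cite[Theorem 2.4]{AT15}, a non-vanishing triple Aeppli-Bott-Chern-Massey product obstructs the existence of a geometrically-Bott-Chern-formal metric. So the goal reduces to exhibiting, on each of the three product manifolds, three Bott-Chern cohomology classes $a,b,c$ with $a\cup b = 0$ and $b\cup c = 0$ in $H_{BC}^{\bullet,\bullet}(M)$ whose triple product $\langle a,b,c\rangle_{ABC}$ is non-zero in the appropriate quotient of Aeppli cohomology. The natural strategy is to work at the level of left-invariant forms on the underlying (solv)manifold: each surface is a quotient of a Lie group, so its Bott-Chern and Aeppli cohomologies can be computed from the Chevalley-Eilenberg complex, and the cohomology of the product is obtained via a Künneth-type decomposition. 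I would pick classes coming from the $1$-forms of the nilpotent/solvable structure equations---precisely those forms $\eta^i$ whose differentials force a primitive $f_{\alpha\beta}$ with $\del\delbar f_{\alpha\beta}$ reproducing the cup product---so that the Massey product is represented by an explicit left-invariant $(p,q)$-form, and then check it is not in the image of the relevant lower cup products.

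Concretely, for each factor I would first write down the complex structure equations (for the Kodaira surface, $d\eta^1=0$, $d\eta^2 = \eta^{1\bar 1}$ up to normalization; for the Inoue surface the corresponding left-invariant equations involving the real parameter from the matrix $M$), identify the Bott-Chern harmonic representatives, and locate a degree-one class $a$ with $a\cup a=0$ at the Bott-Chern level but with a non-trivial primitive. On the product, taking $a,b,c$ to be suitable such classes from the two factors allows the triple product to detect the interaction between the two non-Kähler factors. I would then compute the representative $(-1)^{p+q}\alpha\wedge f_{\beta\gamma} - (-1)^{r+s} f_{\alpha\beta}\wedge\gamma$ explicitly and verify its Aeppli class is non-zero modulo the indeterminacy subspace $H_A^{r+u-1,s+v-1}\cup a + H_A^{p+r-1,q+s-1}\cup c$.

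The main obstacle I anticipate is twofold and lies entirely in the non-formality half. First, one must verify that Bott-Chern and Aeppli cohomologies of these manifolds are genuinely computed by the left-invariant complex---this is standard for nilmanifolds (Kodaira) via the results cited in the proof of Theorem~\ref{thm:SKT_geom_FPS}, but the Inoue surfaces are solvmanifolds, not nilmanifolds, so one needs the analogue of Nomizu/Rollenske cohomology computations in the solvable setting, which is more delicate and must be justified (or quoted from the literature on Inoue surface cohomology). Second, and most importantly, the non-vanishing of the triple product is not a formal matter: it requires a careful choice of the classes $a,b,c$ so that the representative cocycle genuinely escapes the indeterminacy, and this is where the explicit bookkeeping of the Aeppli classes and the cup-product subspaces becomes essential. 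I expect the Inoue-Inoue and mixed Kodaira-Inoue cases to be the technically heaviest, since there the interplay between the solvable structure constants and the Massey representative must be controlled precisely to certify $\langle a,b,c\rangle_{ABC}\neq 0$.
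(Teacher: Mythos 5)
Your strategy coincides with the paper's: the SKT metric is obtained as a product metric (each factor, being a compact complex surface, is SKT since SKT coincides with the Gauduchon condition when $n=2$, or, as the paper does, by an explicit left-invariant diagonal metric and a degree argument), and the non-formality is obtained by exhibiting a non-vanishing triple Aeppli--Bott--Chern--Massey product and invoking \cite[Theorem 2.4]{AT15}. You also correctly identify the two genuine technical points: the reduction of Bott--Chern and Aeppli cohomology to the left-invariant complex (which the paper justifies as in the proof of Theorem \ref{thm:SKT_geom_FPS}, and which indeed requires separate justification for the Inoue factors), and the verification that the Massey representative escapes the indeterminacy.

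That said, your proposal stops exactly where the proof begins: you never name the classes nor verify non-vanishing, and this is the entire content of the statement. For the record, the paper's choices are of the form $\langle[\eta^{1\overline{1}}]_{BC},[\eta^{3\overline{3}}]_{BC},[\eta^{3}]_{BC}\rangle_{ABC}$ for $KT\times KT$ (and analogues with $[\eta^{2\overline{2}}]_{BC}$, $[\eta^{34\overline{3}}]_{BC}$, $[\eta^{4\overline{4}}]_{BC}$ for $\mathcal{S}_M\times\mathcal{S}_M$), i.e.\ one $(1,1)$-class from each factor plus a $(1,0)$- or $(2,1)$-class, so that exactly one cup product needs a genuine $\del\delbar$-primitive (e.g.\ $\eta^{1\overline{1}}\wedge\eta^{3\overline{3}}=\del\delbar(-\frac{1}{A\overline{B}}\eta^{2\overline{4}})$) while the other vanishes identically; this differs from your suggestion of degree-one classes with $a\cup a=0$. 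The decisive step, which your plan only gestures at, is the indeterminacy check: the paper writes out an explicit basis of left-invariant Aeppli-harmonic representatives of $H^{1,0}_A$ and $H^{1,1}_A$, expands a putative membership in $H_A^{1,0}\cup a+H_A^{1,1}\cup c$, wedges with judiciously chosen $d$-closed complementary forms, and integrates (Stokes) to derive a contradiction; in the Inoue--Inoue case it instead shows outright that $H^{2,1}_A(M)=0$, so the indeterminacy subspace is trivial. Without carrying out at least one of these computations the proof is not complete, since a priori the representative $\eta^{23\overline{4}}$ could well lie in the indeterminacy.
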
 
\begin{proof}
We begin by noticing that given the product of any two of the above compact complex surfaces $(M,J)=(M',J')\times(M'',J'')$, such manifold admits an SKT metric.

Let us consider the product metric $g:=g'+g''$, given by the sum of the diagonal constant metrics $g'$ and $g''$ with respect to certain coframes $\{\eta^1,\eta^2\}$ and $\{\eta^3,\eta^4\}$ on, respectively, $(M',J')$ and $(M'',J'')$ and let $$F'=\frac{i}{2}(\eta^{1\overline{1}}+\eta^{2\overline{2}}), \quad F''=\frac{i}{2}(\eta^{3\overline{3}}+\eta^{4\overline{4}})$$ be the fundamental forms associated to, respectively, $g'$ and $g''$. By a dimension argument, we have that on each factor $$\del\delbar F'=0,\quad \del\delbar F''=0.$$ Therefore, if $F:=F'+F''$, it is clear that $$\del\delbar  F=\del\delbar F'+\del\delbar F''=0,$$ i.e., the product metric $g$ is SKT on $(M,J)$.    (We will refer to such metric by $g$.)

We will show that none of the above product manifolds admits geometrically-Bott-Chern-formal metrics by exhibiting a non vanishing Aeppli-Bott-Chern-Massey product on each manifold.

Note that on each product, arguing as in the proof of \ref{thm:SKT_geom_FPS}, we are able to compute the Bott-Chern and Aeppli cohomologies via the complex of left-invariant complex forms.

$(i)$ \textbf{The product of two Kodaira surfaces of primary type.}\\
Let $(M,J)=(KT,J_{KT})\times(KT,J_{KT})$ be the product of two Kodaira surfaces. The complex structure $J$ is determined by the coframe $\{\eta^1,\eta^2,\eta^3,\eta^4\}$ of left-invariant $(1,0)$-forms such that its structure equations read
\begin{equation}\label{eq:struct_KT_KT}
\begin{cases}
d\eta^1=0\\
d\eta^2=A\eta^{1\overline{1}}\\
d\eta^{3}=0\\
d\eta^4=B\eta^{3\overline{3}},
\end{cases}
\end{equation}
for $A,B\in\C\setminus\{0\}$.

From \eqref{eq:struct_KT_KT}, it is easy to see that the following Bott-Chern cohomology classes
\[
[\eta^{1\overline{1}}]_{BC}, \quad [\eta^{3\overline{3}}]_{BC}, \quad [\eta^3]_{BC},
\]
are non zero. Also, we have that
\begin{align}
\eta^{1\overline{1}}\wedge\eta^{3\overline{3}}=\del\delbar\left(-\frac{1}{A\overline{B}}\eta^{2\overline{4}}\right), \quad \eta^{3\overline{3}}\wedge\eta^3=0.
\end{align}
Then, it is well defined the following Aeppli-Bott-Chern-Massey product
\[
\langle[\eta^{1\overline{1}}]_{BC},[\eta^{3\overline{3}}]_{BC},[\eta^{3}]_{BC}\rangle_{ABC}=\left[-\frac{1}{A\overline{B}}\eta^{23\overline{4}
}\right]_A\in\frac{H_A^{2,1}(M)}{H_A^{1,0}(M)\cup\,[\eta^{1\overline{1}}]_{BC}+H_A^{1,1}(M)\cup\,[\eta^{3}]_{BC}}.
\]
Since $d\ast\eta^{23\overline{4}}=-d(\eta^{123\overline{14}})=0$, the form $\eta^{23\overline{4}}$ is Aeppli harmonic, hence, as a cohomology class in $H_{A}^{2,1}(M)$, we have that
\[
\left[-\frac{1}{A\overline{B}}\eta^{23\overline{4}}\right]_A\neq 0.
\]
It remains to show that $\left[-\frac{1}{A\overline{B}}\eta^{23\overline{4}}\right]_A\notin H_A^{1,0}(M)\cup\,[\eta^{1\overline{1}}]_{BC}+H_A^{1,1}(M)\cup\,[\eta^{3}]_{BC}$.

Let us then suppose, by contradiction, the opposite, i.e.,
\begin{equation}\label{eq:ideal_KTxKT}
-\frac{1}{A\overline{B}}\eta^{23\overline{4}}=\sum_{i=1}^{h_A^{1,0}}r_i\xi^i\wedge\eta^{1\overline{1}}+\sum_{j=1}^{h_A^{1,1}}s_j\psi^j\wedge\eta^3+\del R+\delbar S,
\end{equation}
where $h_A^{p,q}:=\dim \mathcal{H}_{A}^{p,q}(M,g)$, $r_i,s_j\in\C$, $R\in\mathcal{A}^{1,1}(M)$, $S\in\mathcal{A}^{2,0}(M)$, and $\{\xi^i\}$ and $\{\psi^j\}$ are the left-invariant harmonic representatives of, respectively, $H_A^{1,0}(M)$,  and $H_A^{1,1}(M)$, with respect to $g$.

It is immediate to compute the invariant Aeppli cohomology of $(M,J)$ of bi-degree $(1,0)$ and $(1,1)$, resulting in
\begin{gather*}
\xi^1=\eta^1,\quad \xi^2=\eta^2,\quad \xi^3=\eta^3,\quad \xi^4=\eta^4,\\
\psi^1=\eta^{1\overline{2}},\,\,\, \psi^2=\eta^{1\overline{3}},\,\,\, \psi^3=\eta^{1\overline{4}},\,\,\,  \psi^4=\eta^{2\overline{1}},\,\,\, \psi^5=\eta^{2\overline{2}},\,\,\, \psi^6=\eta^{2\overline{3}},\,\,\, \psi^7=\eta^{3\overline{1}},\\
\psi^8=\eta^{3\overline{2}},\,\,\, \psi^9=\eta^{3\overline{3}},\,\,\, \psi^{10}=\eta^{3\overline{4}},\,\,\, \psi^{11}=\eta^{4\overline{1}},\,\,\, \psi^{12}=\eta^{4\overline{3}},\,\,\, \psi^{13}=\eta^{2\overline{4}}-\frac{A\overline{B}}{\overline{A}B}\eta^{4\overline{2}}.
\end{gather*}
Then, equation \eqref{eq:ideal_KTxKT} can be rewritten as
\begin{align}\label{eq:ideal_1_KTxKT}
-\frac{1}{A\overline{B}}\eta^{23\overline{4}}&=-r_2\eta^{12\overline{1}}-r_3\eta^{13\overline{1}}-r_4\eta^{14\overline{1}}-s_1\eta^{13\overline{2}}-s_2\eta^{13\overline{3}}-s_3\eta^{13\overline{4}}-s_4\eta^{23\overline{1}}
-s_5\eta^{23\overline{2}}\\
&-s_6\eta^{23\overline{3}}+s_{10}\eta^{34\overline{1}}+s_{11}\eta^{34\overline{3}}+s_{12}\eta^{34\overline{4}}-s_{13}\eta^{23\overline{4}}-s_{13}\frac{A\overline{B}}{\overline{A}B}\eta^{34\overline{2}}+\del R+\delbar S.\nonumber
\end{align}
We note that the form $\eta^{12\overline{134}}$ is $d$-closed. Therefore, if we multiply \eqref{eq:ideal_1_KTxKT} by $\eta^{12\overline{134}}$, we obtain
\[
0=s_{13}\frac{A\overline{B}}{\overline{A}B}\eta^{1234\overline{1234}}+\del(R\wedge\eta^{12\overline{134}})+ \delbar(S\wedge\eta^{12\overline{134}}),
\]
i.e.,
\begin{equation}\label{eq:ideal_2_KTxKT}
s_{13}\frac{A\overline{B}}{\overline{A}B}\text{Vol}=\del(-R\wedge\eta^{12\overline{134}})+\delbar(-S\wedge\eta^{12\overline{134}}).
\end{equation}
By integrating \eqref{eq:ideal_2_KTxKT} and applying Stokes theorem on a manifold with empty boundary, we obtain that $s_{13}=0.$

If we repeat the same argument, multiplying now \eqref{eq:ideal_1_KTxKT} by the $d$-closed form $\eta^{14\overline{123}}$, we obtain
\[
\frac{1}{A\overline{B}}\text{Vol}=\del(R\wedge\eta^{14\overline{123}}) +\delbar(S\wedge\eta^{14\overline{123}}),
\]
which, by integrating and Stokes theorem, leads to a contradiction.

To summarise,
\[
\langle[\eta^{1\overline{1}}]_{BC}, [\eta^{3\overline{3}}]_{BC}, [\eta^{3}]_{BC}\rangle_{ABC}\neq 0,
\]
i.e., we obtained a non vanishing Aeppli-Bott-Chern-Massey product, which, by \cite[Theorem 2.4]{AT15}, implies that $(M,J)$ does not admit geometrically-Bott-Chern-formal metrics.\\
\vspace{0.05cm}

$(ii)$ \textbf{The product of two Inoue surfaces of type $\mathcal{S}_{M}$.}\\
Let $(M,J)=(\mathcal{S}_M,J_{\mathcal{S}_M})\times (\mathcal{S}_M,J_{\mathcal{S}_M})$ be product of two Inoue surfaces of type $\mathcal{S}_M$. The complex structure $J$ is determined by the left invariant $(1,0)$-coframe $\{\eta^1,\eta^2, \eta^3,\eta^4\}$ with structure equations
\begin{equation}\label{eq:struct_SmxSm}
\begin{cases}
d\eta^1=\frac{\alpha-i\beta}{2i}\eta^{12}-\frac{\alpha-i\beta}{2i}\eta^{1\overline{2}}\\
d\eta^2=-i\alpha\eta^{2\overline{2}}\\
d\eta^3=\frac{\gamma-i\delta}{2i}\eta^{34}-\frac{\gamma-i\delta}{2i}-\frac{\gamma-i\delta}{2i}\eta^{3\overline{4}}\\
d\eta^4=-i\gamma\eta^{4\overline{4}},
\end{cases}
\end{equation}
for $\alpha,\gamma\in\R\setminus\{0\}$, $\beta,\delta\in\R$. 

From \eqref{eq:struct_SmxSm}, it is clear that the following Bott-Chern cohomology classes
\[
[\eta^{2\overline{2}}]_{BC}, \quad [\eta^{34\overline{3}}]_{BC}, \quad [\eta^{4\overline{4}}]_{BC}
\]
are well defined and non zero. Moreover,
\[
\eta^{2\overline{2}}\wedge\eta^{34\overline{3}}=\del\delbar(-\frac{1}{2\alpha\gamma})\eta^{23\overline{3}},\qquad \eta^{34\overline{3}}\wedge\eta^{4\overline{4}}=0,
\]
hence the following Aeppli-Bott-Chern-Massey product
\[
\langle[\eta^{2\overline{2}}_{BC},  [\eta^{34\overline{3}}]_{BC},  [\eta^{4\overline{4}}]_{BC}
\rangle_{ABC}=\left[\frac{1}{\alpha\gamma}\eta^{234\overline{34}}
\right]_A\in\frac{H_A^{3,2}(M)}{H_A^{2,1}\cup[\eta^{2\overline{2}}]_{BC}+H_A^{2,1}(M)\cup[\eta^{4\overline{4}}]_{BC}},
\]
is well defined.

Note that since $d(\ast_g\eta^{234\overline{34}})=-d(\eta^{12\overline{1}})=0$, the form $\eta^{234\overline{34}}$ is Aeppli-harmonic and, as a Aeppli cohomology class, $$\left[\frac{1}{\alpha\gamma}\eta^{234\overline{34}}
\right]_A\neq 0.$$
It remains to show that $\left[\frac{1}{\alpha\gamma}\eta^{234\overline{34}}
\right]_A\notin H_A^{2,1}\cup[\eta^{2\overline{2}}]_{BC}+H_A^{2,1}(M)\cup[\eta^{4\overline{4}}]_{BC}$. In order to do, we prove that $H_A^{2,1}(M)=\{0\}$, yielding that $H_A^{2,1}\cup[\eta^{2\overline{2}}]_{BC}+H_A^{2,1}(M)\cup[\eta^{4\overline{4}}]_{BC}=0$.

By definition, we observe that
\[
H_A^{2,1}(M):=\frac{\Ker(\del\delbar\restrict{\mathcal{A}^{2,1}(M)})}{\text{Im}(\del\restrict{ \mathcal{A}^{1,1}(M)})+\text{Im}(\delbar\restrict{\mathcal{A}^{2,0}(M)})}
\]
With the aid of structure equations \eqref{eq:struct_SmxSm} and Sagemath, we can compute
\begin{align*}
&\dim_{\C} \Ker(\del\delbar\restrict{\mathcal{A}^{2,1}(M)})=15\\
&\dim_{\C} \text{Im}(\del\restrict{ \mathcal{A}^{1,1}(M)})=12\\
&\dim_{\C} \text{Im}(\delbar\restrict{\mathcal{A}^{2,0}(M)})=6\\
&\dim_{\C} \text{Im}(\del\restrict{ \mathcal{A}^{1,1}(M)})\cap \text{Im}(\delbar\restrict{\mathcal{A}^{2,0}(M)})=3,
\end{align*}
so that
\begin{align*}
\dim_{\C} H_A^{2,1}(M)&=\dim_{\C} \Ker(\del\delbar\restrict{\mathcal{A}^{2,1}(M)})-\dim_{\C} (\text{Im}(\del\restrict{\mathcal{A}^{1,1}(M)})+\text{Im}(\delbar\restrict{\mathcal{A}^{2,0}(M)}))\\
&=15-(18-3)=0.
\end{align*}
Therefore, $H_{A}^{2,1}(M)=\{0\}$
and
\[
\langle[\eta^{2\overline{2}}]_{BC},  [\eta^{34\overline{3}}]_{BC},  [\eta^{4\overline{4}}]_{BC}
\rangle_{ABC}=\left[\frac{1}{\alpha\gamma}\eta^{234\overline{34}}
\right]_A\neq 0,
\]
which, by \cite[Theorem 2.4]{AT15} implies that $(M,J)$ does not admit any geometrically-Bott-Chern-formal metric.\\
\vspace{0.05cm}

$(iii)$ \textbf{The product of a Inoue surface of type $\mathcal{S}_M$ and a primary Kodaira surface.}\\
Let $(M,J)=(\mathcal{S}_M,J_{\mathcal{S}_M})\times(KT,J_{KT})$ be the product of a Inoue surface of type $\mathcal{S}_M$ and a primary Kodaira surfaces. The complex structure $J$ is determined by the coframe of keft-invariant $(1,0)$-form $\{\eta^1,\eta^2,\eta^3,\eta^4\}$ with structure equations
\begin{equation}\label{eq:struct_SMxKT}
\begin{cases}
d\eta^1=\frac{\alpha-i\beta}{2i}\eta^{12}-\frac{\alpha-i\beta}{2i}\eta^{1\overline{2}}\\
d\eta^2=-i\alpha\eta^{2\overline{2}}\\
d\eta^3=0\\
d\eta^4=\frac{i}{2}\eta^{3\overline{3}},
\end{cases}
\end{equation}
with $\alpha\in\R\setminus\{0\}$, $\beta\in\R$.

We consider the following Bott-Chern cohomology classes
\[
[\eta^{2\overline{2}}]_{BC}, \quad [\eta^{3\overline{3}}]_{BC}, \quad [\eta^3]_{BC}.
\]
They are clearly well defined and they are not zero. Moreover,
\[
\eta^{2\overline{2}}\wedge\eta^{3\overline{3}}=\del\delbar\left(\frac{2}{\alpha}\eta^{2\overline{4}}\right), \quad \eta^{3\overline{3}}\wedge\eta^{3}=0.
\]
Therefore, the Aeppli-Bott-Chern-Massey product
\[
\langle [\eta^{2\overline{2}}]_{BC}, [\eta^{3\overline{3}}]_{BC}, [\eta^3]_{BC}\rangle_{ABC}=\left[\frac{2}{\alpha}\eta^{23\overline{4}}\right]_A\in\frac{H_A^{2,1}(M)}{H_A^{1,0}(M)\cup [\eta^{2\overline{2}}]_{BC}+H_A^{1,1}(M)\cup[\eta^{3}]_{BC}}
\]
is well defined.

Note that $d\ast(\frac{2}{\alpha}\eta^{23\overline{4}})=-\frac{2}{\alpha}d(\eta^{123\overline{14}})=0$, i.e., the form $\frac{2}{\alpha}\eta^{23\overline{4}}$ is Aeppli-harmonic and
\[
\left[\frac{2}{\alpha}\eta^{23\overline{4}}\right]_A\neq 0,
\]
as a Aeppli cohomology class.

It remains to show that $\left[\frac{2}{\alpha}\eta^{23\overline{4}}\right]_A\notin H_A^{1,0}(M)\cup [\eta^{2\overline{2}}]_{BC}+H_A^{1,1}(M)\cup[\eta^{3}]_{BC}$. Let us suppose by contradiction that this is the case, i.e.,
\begin{equation}\label{eq:ideal_SmxKT}
\frac{2}{\alpha}\eta^{23\overline{4}}=\sum_{i=1}^{h_A^{1,0}}\lambda_i\xi^i\wedge\eta^{2\overline{2}}+\sum_{j=1}^{h_A^{1,1}}\mu_j\psi^j\wedge\eta^3+\del R+ \delbar S,
\end{equation}
with $\lambda_i,\mu_j\in\C$, $R\in\bigwedge^{1,1}\mathfrak{g}$, $S\in\bigwedge^{2,0}\mathfrak{g}$, and $\{\xi^i\}$ and $\{\mu^j\}$ are, respectively, a basis for $\mathcal{H}_{A}^{1,0}(M,g)$ and $\mathcal{H}_A^{1,1}(M,g)$. By structure equations \eqref{eq:struct_SMxKT}, we can compute the spaces of Aeppli-harmonic forms with respect to $g$
\[
\mathcal{H}_A^{1,0}(M,g)=\langle\eta^3\rangle,\quad \mathcal{H}_A^{1,1}(M,g)=\langle\eta^{3\overline{3}},\eta^{3\overline{4}},\eta^{4\overline{3}}\rangle.
\]
Then, equation \eqref{eq:ideal_SmxKT} becomes
\begin{equation}\label{eq:ideal_1_SmxKT}
\frac{2}{\alpha}\eta^{23\overline{4}}=-\lambda_1\eta^{23\overline{3}}+\del R+\delbar S.
\end{equation}
Since the form $\eta^{14\overline{123}}$ is $d$-closed, if we multiply \eqref{eq:ideal_1_SmxKT} by $\eta^{14\overline{123}}$, we obtain
\[
\frac{2}{\alpha}\text{Vol}=\del(-R\wedge\eta^{14\overline{123}})+\delbar(-S\wedge\eta^{14\overline{123}}),
\]
which, by integrating over $M$ and applying Stokes theorem, leads to contradiction.

Hence, we showed that
\[
\langle[\eta^{2\overline{2}}]_{BC},[\eta^{3\overline{3}}]_{BC},[\eta^3]_{BC}\rangle_{ABC}\neq 0,
\]
i.e., $(M,J)$ admits a non vanishing Aeppli-Bott-Chern-Massey product. By \cite[Theorem 2.4]{AT15}, this implies that $(M,J)$ does not admit any geometrically-Bott-Chern-formal metric.
\end{proof}
We prove one more result in this direction, showing that the existence of Aeppli-Bott-Chern-Massey products obstructs the existence of geometrically-Bott-Chern-formal metrics on a family of $4$-dimensional complex nilmanifolds which cannot be constructed as a product of two or more manifolds.

We start by considering a nilpotent the complex $4$-dimensional Lie algebra $\mathfrak{g}$ defined by the set of forms $\{\eta^1,\eta^2,\eta^3,\eta^4\}$  satisfying the following structure equations
\begin{equation*}
\begin{cases}
d\eta^1=0\\
d\eta^2=0\\
d\eta^3=A\eta^{2\overline{1}}\\
d\eta^4=B_1\eta^{1\overline{1}}+B_2\eta^{1\overline{1}}+B_3\eta^{2\overline{2}}.
\end{cases}
\end{equation*}
We will consider the natural complex structure $J$ on $\mathfrak{g}$ which arises by choosing $\{\eta^1,\eta^2,\eta^3,\eta^4\}$ as a $(1,0)$-form on $\mathfrak{g}$.

If we pick rational constants $A, B_1,B_2,B_3 \in\q$, by Malcev's Theorem we have tha the complex $4$-dimensional Lie group $G$ associated to $\mathfrak{g}$ admits a discrete uniform subgroup $\Gamma$ such that $M:=\Gamma\backslash G$ is compact and, in particular, $(M,J)$ is a complex $4$-dimensional nilmanifold.

Moreover, with the same choice of $A, B_1,B_2,B_3 \in\q$, by \cite[Corollary ]{Rollenske2} we have the following isomorphism
\[
 H_{BC}^{p,q}(\mathfrak{g},J) \rightarrow H_{BC}^{p,q}(M),
\]
i.e., the Bott-Chern cohomology of $(M,J)$ can be computed by means of the complex of lft-invariant forms on $\mathfrak{g}$.
\begin{theorem}\label{thm:4-dim_SKT_BC_form}
Let $M=\Gamma\backslash G$ be a complex $4$-dimensional nilmanifold endowed with a left-invariant complex structure $J$ determined by a coframe of $(1,0)$-forms $\{\eta^1,\eta^2,\eta^3,\eta^4\}$ with structure equations
\begin{equation}\label{eq:struct_eq_SKT_non_geom}
\begin{cases}
d\eta^1=0\\
d\eta^2=0\\
d\eta^3=A\eta^{2\overline{1}}\\
d\eta^4=B_1\eta^{12}+B_2\eta^{1\overline{1}}+B_3\eta^{2\overline{2}},
\end{cases}
\end{equation}
with $A\in\C\setminus\{0\}$, $B_i\in\C$, such that
\begin{equation}\label{eq:SKT_4dim}
|A|^2+|B_2|^2=2\Real(B_2\overline{B}_3).
\end{equation}
Then $(M,J)$ admits a SKT metric but does not admit any geometrically-Bott-Chern-formal metric. 
\end{theorem}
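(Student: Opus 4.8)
The plan is to prove the two assertions in turn: the diagonal metric is SKT exactly under \eqref{eq:SKT_4dim}, and a single triple Aeppli--Bott--Chern--Massey product is non-zero, which by \cite[Theorem 2.4]{AT15} obstructs geometric Bott--Chern formality. As in Theorem \ref{thm:product_BC_form}, both the Bott--Chern and the Aeppli cohomology of $(M,J)$ are computed by the complex of left-invariant forms, so every computation below may be carried out on $\mathfrak{g}$. For the SKT part I would take $F=\frac{i}{2}\sum_{h=1}^{4}\eta^{h\overline{h}}$ and compute $\del\delbar F$ from \eqref{eq:struct_eq_SKT_non_geom}. Since $\eta^1,\eta^2$ are closed, only $\eta^{3\overline{3}},\eta^{4\overline{4}}$ contribute, and a short calculation gives $\del\delbar\eta^{3\overline{3}}=-|A|^2\eta^{12\overline{12}}$ and $\del\delbar\eta^{4\overline{4}}=(2\Real(B_2\overline{B}_3)-|B_1|^2)\eta^{12\overline{12}}$; hence $\del\delbar F$ is a multiple of $\eta^{12\overline{12}}$ that vanishes precisely under \eqref{eq:SKT_4dim}. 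The decisive by-product is that under \eqref{eq:SKT_4dim} one has $\del\delbar(\eta^{3\overline{3}}+\eta^{4\overline{4}})=0$, so a left-invariant $(1,1)$-form $\psi=\sum c_{i\overline{j}}\eta^{i\overline{j}}$ is $\del\delbar$-closed if and only if $c_{3\overline{3}}=c_{4\overline{4}}$; in particular every Aeppli-harmonic $(1,1)$-form satisfies $c_{3\overline{3}}=c_{4\overline{4}}$.

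For the obstruction I would use the (non-zero) Bott--Chern classes $a=[\eta^{1\overline{1}}]_{BC}$, $b=[\eta^{2\overline{2}}]_{BC}$, $c=[\eta^{2}]_{BC}$. From $\del\delbar\eta^{3\overline{3}}=-|A|^2\eta^{12\overline{12}}$ we get $\eta^{1\overline{1}}\wedge\eta^{2\overline{2}}=\del\delbar\big(\frac{1}{|A|^2}\eta^{3\overline{3}}\big)$, so $a\cup b=0$ with primitive $f_{\alpha\beta}=\frac{1}{|A|^2}\eta^{3\overline{3}}$, while $b\cup c=0$ holds trivially because $\eta^{2\overline{2}}\wedge\eta^{2}=0$ (so $f_{\beta\gamma}=0$). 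The product is then $\langle a,b,c\rangle_{ABC}=\big[-\frac{1}{|A|^2}\eta^{23\overline{3}}\big]_A$ in $H_A^{2,1}(M)\big/\big(H_A^{1,0}(M)\cup a+H_A^{1,1}(M)\cup c\big)$. To see the representative is Aeppli non-zero I would check that $\eta^{23\overline{3}}$ is Aeppli-harmonic for the diagonal metric: it is $\delbar$-closed (so $\del\delbar$-closed), and $\ast\eta^{23\overline{3}}$ is a constant multiple of $\eta^{124\overline{14}}$, which is $d$-closed by \eqref{eq:struct_eq_SKT_non_geom}; hence $[\eta^{23\overline{3}}]_A\neq 0$.

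The heart of the proof---and the step I expect to be the main obstacle---is showing the class is non-zero modulo the indeterminacy, because the SKT-forced harmonic form $\eta^{3\overline{3}}+\eta^{4\overline{4}}$ lies in $H_A^{1,1}(M)$ and its cup with $c$ equals $[\eta^{23\overline{3}}+\eta^{24\overline{4}}]_A$, which has the same $\eta^{23\overline{3}}$-component as the product; a single integration against $\eta^{14\overline{124}}$ therefore only pins down a coefficient rather than producing a contradiction. The relation $c_{3\overline{3}}=c_{4\overline{4}}$ is exactly what defeats this. Suppose, for contradiction,
\[
-\tfrac{1}{|A|^2}\eta^{23\overline{3}}=\sum_i r_i\,\xi^i\wedge\eta^{1\overline{1}}+\sum_j s_j\,\psi^j\wedge\eta^{2}+\del R+\delbar S,
\]
with $\{\xi^i\}$, $\{\psi^j\}$ left-invariant Aeppli-harmonic representatives of $H_A^{1,0}(M)$, $H_A^{1,1}(M)$, and $R\in\A^{1,1}(M)$, $S\in\A^{2,0}(M)$. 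I would pair this identity with the two $d$-closed $(2,3)$-forms $\Theta_1=\eta^{14\overline{124}}$ and $\Theta_2=\eta^{13\overline{123}}$ and integrate over $M$.

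In both pairings the $\xi^i$-terms vanish, since $\eta^{1\overline{1}}\wedge\Theta_k$ repeats $\eta^1$, and the $\del R,\delbar S$-terms vanish by Stokes because $d\Theta_k=0$. Pairing with $\Theta_1$, only the $\eta^{3\overline{3}}$-component of each $\psi^j$ survives, and using $\eta^{3\overline{3}}\wedge\eta^{2}\wedge\Theta_1=\eta^{23\overline{3}}\wedge\Theta_1=\pm\mathrm{Vol}$ and cancelling the common factor $\int_M\eta^{23\overline{3}}\wedge\Theta_1\neq 0$, one obtains $\sum_j s_j c_{3\overline{3}}^{(j)}=-\frac{1}{|A|^2}$; pairing with $\Theta_2$, only the $\eta^{4\overline{4}}$-component survives, and since $\eta^{23\overline{3}}\wedge\Theta_2=0$ while $\eta^{4\overline{4}}\wedge\eta^{2}\wedge\Theta_2=\eta^{24\overline{4}}\wedge\Theta_2=\pm\mathrm{Vol}$, one obtains $\sum_j s_j c_{4\overline{4}}^{(j)}=0$. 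As $c_{3\overline{3}}^{(j)}=c_{4\overline{4}}^{(j)}$ for every harmonic $\psi^j$, these two relations are incompatible. Hence $\langle a,b,c\rangle_{ABC}\neq 0$, and by \cite[Theorem 2.4]{AT15} the manifold $(M,J)$ admits no geometrically-Bott-Chern-formal metric.
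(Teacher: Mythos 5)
Your proposal is correct and follows essentially the same route as the paper: the same diagonal SKT metric, the same triple product $\langle[\eta^{1\overline{1}}]_{BC},[\eta^{2\overline{2}}]_{BC},[\eta^{2}]_{BC}\rangle_{ABC}=[-\tfrac{1}{|A|^2}\eta^{23\overline{3}}]_A$, and the same pair of integrations against the $d$-closed forms $\eta^{13\overline{123}}$ and $\eta^{14\overline{124}}$ — the paper just lists the explicit Aeppli-harmonic basis (whose only element meeting $\eta^{3\overline{3}},\eta^{4\overline{4}}$ is $\eta^{3\overline{3}}+\eta^{4\overline{4}}$) and kills its coefficient first, which is exactly your relation $c_{3\overline{3}}=c_{4\overline{4}}$ in disguise. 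As a side remark, your computation $\del\delbar F\propto\bigl(2\Real(B_2\overline{B}_3)-|A|^2-|B_1|^2\bigr)\eta^{12\overline{12}}$ is right and indicates that the $|B_2|^2$ in \eqref{eq:SKT_4dim} should read $|B_1|^2$.
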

\begin{proof}

Let now consider the diagonal metric $g$ with fundamental associated form
\[
F=\frac{i}{2}\sum_{h=1}^4\eta^{h\overline{h}}.
\]
With the aid of \eqref{eq:struct_eq_SKT_non_geom}, we can see clearly that $g$ is SKT, i.e., $\del\delbar F=0$.

We will show that $(M,J)$ admits a non vanishing $ABC$-Massey product, which suffices to prove that there exists no geometrically-Bott-Chern-formal metric on $(M,J)$. 

Let us consider the following Bott-Chern cohomology classes
\[
[\eta^{1\overline{1}}]_{BC},\quad [\eta^{2\overline{2}}]_{BC},\quad [\eta^{2}]_{BC}.
\]
Since $$\eta^{2\overline{2}}\wedge\eta^{\overline{1}2}=0, \quad \eta^{1\overline{1}}\wedge\eta^{2\overline{2}}=\del\delbar(\frac{1}{|A|^2}\eta^{3\overline{3}}),$$ the Aeppli-Bott-Chern-Massey product
\[
\langle[\eta^{1\overline{1}}]_{BC}, [\eta^{2\overline{2}}]_{BC}, [\eta^{2}]_{BC}\rangle_{ABC}=\left[-\frac{1}{|A|^2}\eta^{23\overline{3}}\right]_A\in \frac{H_A^{2,1}(M)}{H_A^{1,0}(M)\cup[\eta^{1\overline{1}}]_{BC}+H_A^{1,1}(M)\cup[\eta^{2}]_{BC}},
\]
is well defined.

We notice that the $d\ast_g(\frac{1}{|A|^2}\eta^{23\overline{3}})=\frac{1}{|A|^2}d(\eta^{124\overline{14}})=0$, i.e., the form $\eta^{23\overline{3}}$ is Aeppli harmonic and, as a Aeppli cohomology class, we have that $[-\frac{1}{|A|^2}\eta^{23\overline{3}}]_A\neq 0$. It remains to show that $[-\frac{1}{|A|^2}\eta^{23\overline{3}}]_A\notin H_A^{1,0}(M)\cup[\eta^{1\overline{1}}]_{BC}+H_A^{1,1}(M)\cup[\eta^{2}]_{BC}$. 

Let us now suppose by contradiction that $[-\frac{1}{|A|^2}\eta^{23\overline{3}}]_A\in H_A^{1,0}(M)\cup[\eta^{1\overline{1}}]_{BC}+H_A^{1,1}(M)\cup[\eta^{2}]_{BC}$. By straightforward computations, it is easy to check that the spaces $\mathcal{H}_A^{1,0}(M,g)$ and $\mathcal{H}_{A}^{1,1}(M,g)$ are generated, respectively, by  $\langle\psi^j\rangle_{j=1}^2$  and $\langle\xi^i\rangle_{i=1}^{11}$, where
\[
\psi^1=\eta^1, \quad \psi^2=\eta^2,
\]
and
\begin{gather*}
\xi^1=\eta^{1\overline{3}},\quad \xi^2= \eta^{1\overline{4}},\quad \xi^3=\eta^{2\overline{3}},\quad \xi^4=\eta^{2\overline{4}},\\
\xi^5=\eta^{3\overline{1}},\quad \xi^6=\eta^{3\overline{2}},\quad \xi^7=\eta^{3\overline{4}},\quad \xi^8=\eta^{4\overline{1}},\\
\xi^9=\eta^{4\overline{2}}, \quad \xi^{10}=\eta^{4\overline{3}},\quad \xi^{11}=\eta^{3\overline{3}}+\eta^{4\overline{4}}.
\end{gather*}

Then, $[-\frac{1}{|A|^2}\eta^{23\overline{3}}]_A\in H_A^{1,0}(M)\cup[\eta^{1\overline{1}}]_{BC}+H_A^{1,1}(M)\cup[\eta^{2}]_{BC}$ implies that 
\begin{equation}\label{eq:eta2313_in_ideal}
-\frac{1}{|A|^2}\eta^{23\overline{3}}=\sum_{i=1}^{2}r_i\psi^i\wedge\eta^{1\overline{1}}+\sum_{j=1}^{11}s_j\xi^j\wedge\eta^{2}+\del R+\delbar S,
\end{equation}
for $r_i,s_j\in\C$, $R\in\mathcal{A}^{1,2}(M)$, $S\in\mathcal{A}^{2,1}(M)$, so that
\begin{align}\label{eq:ideal_0_4dim}
-\frac{1}{|A|^2}\eta^{23\overline{3}}=-r_2\eta^{12\overline{1}}-s_1\eta^{21\overline{3}}-s_2\eta^{12\overline{4}}
+s_5\eta^{23\overline{1}}+s_6\eta^{23\overline{2}}+s_7\eta^{23\overline{4}}\\+s_8\eta^{24\overline{1}}+s_9\eta^{24\overline{2}}+s_{10}\eta^{24\overline{3}}+s_{11}\eta^{23\overline{3}}+s_{11}\eta^{24\overline{4}}+\del R+\delbar S.\nonumber
\end{align}
We note that the form $\eta^{13\overline{123}}$ is $d$-closed, therefore, if we multiply \eqref{eq:ideal_0_4dim} by $\eta^{13\overline{123}}$, we obtain
\[
0=s_{11}\eta^{1234\overline{1234}}+\del(R\wedge\eta^{13\overline{123}})+\delbar(S\wedge\eta^{13\overline{123}}),
\]
which, by integrating and applying Stokes theorem, forces $s_{11}=0$. Equation \eqref{eq:ideal_0_4dim} reduces to
\begin{align}\label{eq:ideal1_4dim}
-\frac{1}{|A|^2}\eta^{23\overline{3}}=-r_2\eta^{12\overline{1}}-s_1\eta^{21\overline{3}}-s_2\eta^{12\overline{4}}
+s_5\eta^{23\overline{1}}+s_6\eta^{23\overline{2}}\\
+s_7\eta^{23\overline{4}}+s_8\eta^{24\overline{1}}+s_9\eta^{24\overline{2}}+s_{10}\eta^{24\overline{3}}+\del R+\delbar S\nonumber.
\end{align}
Now, the form $\eta^{14\overline{124}}$ is $d$-closed, so if we multiply \eqref{eq:ideal1_4dim} by $\eta^{14\overline{124}}$, we obtain
\[
-\frac{1}{|A|^2}\text{Vol}=\del(R\wedge\eta^{14\overline{124}})+\delbar(S\wedge\eta^{14\overline{124}}),
\]
which, by integration and applying Stokes theorem, leads to contradiction.

Therefore, we showed that
\[
\langle[\eta^{1\overline{1}}]_{BC}, [\eta^{2\overline{2}}]_{BC}, [\eta^{2}]_{BC}\rangle_{ABC}\neq 0,
\]
i.e., $(M,J)$ admits a non vanishing Aeppli-Bott-Chern-Massey product, which implies that $(M,J)$ does not admit any geometrically-Bott-Chern-formal metric.
\end{proof}

\end{document}